\newtheorem{lemme}{Lemme}[section]
\newtheorem{proposition}[lemme]{Proposition}
\newtheorem{corollaire}[lemme]{Corollaire}
\newtheorem{theoreme}[lemme]{Th\'eor\`eme}
\newtheorem{Theoreme}{Théorème}
\newtheorem{exemple}[lemme]{Exemple}
\newtheorem{definition}[lemme]{D\'efinition}
\newtheorem{remarque}[lemme]{Remarque}
\newcommand\cf{cf\@. }
\newcommand\eg{e\@.g\@. }
\newcommand\pa{ \partial}
\newcommand\bbB{\mathbb B}
\newcommand\bbC{\mathbb C}
\newcommand\bbN{\mathbb N}
\newcommand\bbS{\mathbb S}
\renewcommand\Re{\operatorname{Re}}
\renewcommand\Im{\operatorname{Im}}
\newcommand\CI{\mathcal{C}^{\infty}}
\newcommand\Diff{\operatorname{Diff}}
\newcommand\Ric{\operatorname{Ric}}
\newcommand\cC{\mathcal{C}}
\newcommand\cA{\mathcal{A}}
\newcommand\db{\overline{\pa}}
\newcommand\cV{\mathcal{V}}
\newcommand\cU{\mathcal{U}}
\newcommand\End{\operatorname{End}}
\newcommand\pr{\operatorname{pr}}
\newcommand\phg{\operatorname{phg}}
\newcommand\Id{\operatorname{Id}}
\newcommand\hu{\widehat{u}}
\newcommand\homega{\widehat{\omega}}
\newcommand\hDelta{\widehat{\Delta}}
\newcommand\CR{\operatorname{CR}}
\newcommand\ahc{\operatorname{AHC}}
\begin{document}
\title[]
{Polyhomog\'en\'eit\'e des m\'etriques asymptotiquement hyperboliques complexes le long du flot de Ricci}

\author{Fr\'ed\'eric Rochon}
\begin{abstract}   
On montre que la polyhomogénéité à l'infini d'une métrique asymptotiquement hyperbolique complexe est préservée par le flot de Ricci-DeTurck.  De plus, si la métrique initiale est «lisse jusqu'au bord», alors il en sera de même pour les solutions du flot de Ricci normalisé et du flot de Ricci-DeTurck.  Lorsque la métrique initiale est aussi kählérienne,  des résultats plus précis sont obtenus en termes d'un potentiel.    
\end{abstract}
\maketitle

\tableofcontents

\section*{Introduction}  \label{int.0}

L'étude du flot de Ricci sur les variétés riemanniennes complètes non-compactes a débuté avec les travaux de Shi \cite{Shi1989}, qui a montré que lorsque la courbure de la métrique initiale est uniformément bornée, il existe au moins pour un court laps de temps une solution $g(t)$ complète de courbure uniformément bornée.  Comme l'ont montré Chen et Zhu \cite{Chen-Zhu2006}, la solution  est unique lorsqu'on requiert que la courbure soit uniformément bornée.  Au cours des dernières années, il y a eu plusieurs travaux sur l'existence d'une solution pour tout temps positif et sur la convergence du flot vers une métrique d'Einstein, entre autres pour des surfaces complètes \cite{Ji-Mazzeo-Sesum2009, AAR2013, IMS2010, Giesen-Topping2011} et plus généralement pour les variétés kählériennes \cite{Chau2004, Lott-Zhang2011}.  Plusieurs résultats de stabilité ont aussi été obtenus, notamment pour l'espace euclidien \cite{SSS2008}, pour l'espace hyperbolique réel \cite{SSS2011} ou certaines de ses déformations \cite{Bahuaud2011, QSW2013}, pour des variétés hyperboliques de volume fini \cite{Bamler2010a} et pour des espaces symétriques de type non-compact \cite{Bamler2010b}.    

Dans la plupart de ces travaux, on se restreint à un type particulier de géométrie à l'infini, que ce soit un cône, un cusp ou l'espace hyperbolique.  Pour faire une étude spectrale du laplacien associé à de telles métriques, il est souvent utile d'introduire une compactification de la variété complète par une variété à bord ou à coins, voir entre autres \cite{Mazzeo-Melrose, EMM, MelroseAPS, MelroseGST, MazzeoEdge, Mazzeo-MelrosePhi, Vaillant}.  En fait, pour obtenir un prolongement méromorphe de la résolvante du laplacien, il faut typiquement non seulement savoir que la métrique considérée est approximée par le modèle géométrique à l'infini, mais encore qu'elle admette un développement asymptotique polyhomogène complet à l'infini, autrement dit au bord de la compactification, une sorte de «régularité à l'infini».  Lorsqu'une telle métrique est modifiée par le flot de Ricci, il est donc naturel de se demander si cette régularité à l'infini sera préservée par le flot. 

Dans \cite{AAR2013}, une réponse positive à cette question a été obtenue pour des surfaces munies de métriques complètes approximées par des cusps ou des entonnoirs à l'infini, ce qui a permis d'étudier le déterminant régularisé du Laplacien le long flot et de montrer que dans une classe conforme donnée et en imposant une condition de normalisation, le déterminant est maximal pour la métrique hyperbolique lorsque celle-ci existe.    Pour les métriques asymptotiquement hyperboliques, la persistance le long du flot de Ricci d'un développement asymptotique lisse a été obtenue par Bahuaud \cite{Bahuaud2011}.  Pour des métriques kählériennes complètes de volume fini sur des variétés quasi-projectives, la persistance d'un développement asymptotique lisse a été obtenue dans \cite{Rochon-Zhang2012} en introduisant une certaine compactifaction par une variété à coins.

Dans le présent article, nous obtenons un tel résultat pour des métriques asymptotiquement hyperboliques complexes, abrégé métriques $\ahc$.  Rappelons que si $X$ est une variété compacte à bord de dimension paire $2n$ et que $c:\pa X\times [0,\epsilon)\hookrightarrow X$ est un choix de voisinage tubulaire du bord, alors une métrique $g$ sur $X\setminus \pa X$ est asymptotiquement hyperbolique complexe s'il existe une structure $\CR$ $(\pa X,H,J)$ strictement pseudoconvexe le long d'une distribution de contact $H$ et une forme de contact $\eta$ telles que asymptotiquement près du bord (voir la Définition~\ref{ahc.2} ci-bas pour plus de détails), on ait
\begin{equation}
  c^*g \sim \frac{4d\rho^2}{\rho^2}+ \frac{\eta^2}{\rho^4}+ \frac{d\eta(\cdot,J\cdot)}{\rho^2},
\label{int.1}\end{equation}
où $\rho$ est la coordonnée sur le deuxième facteur de $\pa X\times [0,\epsilon)$.
En prenant $X=\bbB^{2n}\subset \bbC^n$ la boule fermée de rayon $1$, $\rho = \sqrt{1-|z|^2}$ et $\eta=i(\pa-\db)(1-|z|^2)$ la forme de contact standard sur $\bbS^{2n-1}$, le terme de droite de \eqref{int.1} est précisément la métrique hyperbolique complexe près du bord, normalisée de sorte que sa courbure sectionnelle holomorphe soit égale à $-1$.  Plus généralement, comme expliqué dans \cite{Biquard2000}, le comportement à l'infini d'une métrique $\ahc$ est modelé sur celui de la métrique hyperbolique complexe, ce qui signifie en particulier qu'une métrique $\ahc$ est asymptotiquement Einstein avec constante de Einstein $\lambda= -\frac{n+1}{2}$.  Pour les métriques $\ahc$, il est donc naturel de considérer le flot de Ricci normalisé
\begin{equation}
         \frac{\pa g}{\pa t}= -\Ric(g)- \frac{n+1}{2}g, \quad g(0)=g_0.
\label{int.2}\end{equation}   
Comme cette équation n'est pas parabolique, il est souvent utile de considérer plutôt le flot de Ricci-DeTurck donné en coordonnées locales par
\begin{equation}
  \frac{\pa g_{ij}}{\pa t}=  -\Ric(g)_{ij}- \frac{n+1}{2} g_{ij} + \nabla_i W_j + \nabla_j W_i, \quad g(0)=g_0,
\label{int.3}\end{equation}
avec $t\to W(t)$ la famille de champs de vecteurs donnée par
\begin{equation}
              W^k= \frac12 g^{pq}\left( \Gamma^k_{pq}(t)-\Gamma^k_{pq}(0) \right),
\label{int.4}\end{equation}
où $\Gamma^k_{pq}(t)$ est le symbole de Christoffel de la métrique $g(t)$.  Le résultat principal de cet article est le suivant, voir le Théorème~\ref{ahc.7} pour l'énoncé détaillé.
\begin{Theoreme}
Si $g_0$ est une métrique $\ahc$ polyhomogène, alors la solution $g(t)$ au flot de Ricci-DeTurck \eqref{int.3} est aussi une métrique $\ahc$ polyhomogène.  De même, $g(t)$ sera une métrique $\ahc$ lisse jusqu'au bord si $g_0$ est une métrique $\ahc$ lisse jusqu'au bord.   
\label{int.5}\end{Theoreme}   
Lorsque $g_0$ est une métrique lisse jusqu'au bord, on peut en particulier utiliser ce théorème pour conclure que la solution $g(t)$ au flot de Ricci normalisé \eqref{int.2} est une métrique $\ahc$ lisse jusqu'au bord, voir le Corollaire~\ref{ahc.8} ci-bas.  Pour démontrer le Théorème~\ref{int.5}, l'idée est de construire formellement par récurrence le développement asymptotique polyhomogène de $g(t)$ à l'aide de celui de $g_0$ pour ensuite établir à l'aide d'une estimation de décroissance pour les solutions d'équations paraboliques linéaires (le Corollaire~\ref{par.3}) que ce développement asymptotique est bien celui de $g(t)$.  L'approche utilisée est assez robuste et peut servir dans d'autres cadres, notamment pour donner une preuve différente du résultat de Bahuaud \cite{Bahuaud2011} sur le développement asymptotique des métriques conformément compactes.  

Lorsque la métrique $\ahc$ initiale $g_0$ est aussi kählérienne et que $\omega_0$ est sa forme de Kähler, on peut remplacer le flot de Ricci normalisé par une équation parabolique pour un potentiel $u$,
\begin{equation}
  \frac{\pa u}{\pa t}= \log\left( \frac{(\omega_t+ i\pa \db u)^n}{\omega_0^n} \right) -\frac{n+1}{2}u, \quad u(0,\cdot)=0.
\label{int.6}\end{equation}
où
$$
     \omega_t= -\frac{2}{n+1}Ric(\omega_0)+ e^{-\frac{n+1}{2}t}(\omega_0+\frac{2}{n+1}\Ric(\omega_0))
$$
et $\omega_t+i\pa \db u$ est la forme kählérienne de la solution $g(t)$ au flot de Ricci normalisé \eqref{int.2}.  Dans ce cas, en suivant une approche similaire à celle développée dans \cite{Rochon-Zhang2012}, on obtient le résultat suivant, voir le Théorème~\ref{ricci.5} ci-bas pour l'énoncé détaillé.  
\begin{Theoreme}
Si $g_0$ est une métrique kählérienne $\ahc$ polyhomogène, alors la solution $u(t)$ de \eqref{int.6} est polyhomogène.  En particulier, la solution $g(t)$ au flot de Ricci normalisé est une métrique $\ahc$ polyhomogène.  
\label{int.7}\end{Theoreme}   

Lorsque le flot existe pour tout temps et converge, insistons sur le fait que les Théorèmes~\ref{int.5} et \ref{int.7} ne font aucune assertion sur la polyhomogénéité de la métrique vers laquelle le flot converge.  En fait, à la limite, la nature du développement asymptotique peut changer substantiellement.  Par exemple, sur un domaine strictement pseudoconvexe, on peut commencer avec une métrique kählérienne $\ahc$ lisse jusqu'au bord pour laquelle le flot de Ricci converge vers la métrique de Cheng-Yau.  Or, par les résultats de Lee et Melrose \cite{Lee-Melrose}, on sait que la métrique de Cheng-Yau admet un développement polyhomogène, mais qu'elle n'est typiquement pas lisse jusqu'au bord.    

L'article est organisé comme suit.  Le \S~\ref{Theta.0} consiste en un bref rappel sur la notion de $\Theta$-structure introduite par Epstein, Mendoza et Melrose \cite{EMM}.  On obtient ensuite un résultat de décroissance pour certaines solutions d'équations linéaires $\Theta$-paraboliques dans le \S~\ref{depar.0}, ce qui nous permet d'étudier la polyhomogénéité des solutions de certaines de ces équations. Dans le \S~\ref{ahc.0}, on donne la démonstration du résultat principal de l'article, à savoir que le flot de Ricci-DeTurck préserve la polyhomogénéité à l'infini pour les métriques $\ahc$.  Enfin, on obtient dans le \S~\ref{kahc.0} un résultat similaire en termes d'un potentiel pour les métriques $\ahc$ kählériennes.

\section{Rappel sur la notion de $\Theta$-structure} \label{Theta.0}

La notion de $\Theta$-structure a été introduite par Epstein, Melrose et Mendoza \cite{EMM} pour construire la résolvante du Laplacien associé à la métrique de Bergman et la métrique de Cheng-Yau \cite{Cheng-Yau} sur un domaine strictement pseudoconvexe.  Dans ce cadre, la $\Theta$-structure est donnée par la structure CR sur le bord.  

\begin{definition}
Une \textbf{$\Theta$-structure} sur une variété à bord $X$ (compacte et de classe $\CI$) est donnée par une $1$-forme $\Theta\in\CI(\pa X;T^*X)$ sur le bord de $X$  telle que $\iota^*\Theta$ ne s'annule nulle part sur $\pa X$, où $\iota:\pa X\hookrightarrow X$ est l'inclusion canonique.     
\label{tm.1}\end{definition}

Soit $\rho \in \CI(X)$ une fonction de définition du bord, c'est-à-dire que $\rho\ge 0$, $\pa X= \rho^{-1}(0)$ et $d\rho$ ne s'annule pas sur $\pa X$.  Une $\Theta$-structure définit alors une algèbre de Lie de champs de vecteurs donnée par
\begin{equation}
\cV_{\Theta}(X)= \{ \xi\in \CI(X;TX) \quad | \quad  \left. \xi \rho\right|_{\pa X}=0, \quad \overline{\Theta}(\xi)\in \rho^2 \CI(X) \},
\label{tm.2}\end{equation}
où la forme $\overline{\Theta}\in \CI(X;TX)$ telle que $\left. \overline{\Theta} \right|_{\pa X}=\Theta$ est un choix d'extension de $\Theta$ à $X$.  En particulier, remarquons que les champs de vecteurs de $\cV_{\Theta}(X)$ sont tous tangents au bord.  Comme on peut aisément le vérifier (voir Proposition~1.6 dans \cite{EMM}), l'algèbre de Lie $\cV_{\Theta}(X)$ ne dépend pas du choix de l'extension $\overline{\Theta}$ et du choix de la fonction de définition du bord $\rho$.  De plus, elle demeure inchangée lorsque définie à partir d'un autre représentant de la classe conforme de $\Theta$.  Elle est donc canoniquement associée à la $\Theta$-structure.  On peut décrire l'algèbre de Lie $\cV_{\Theta}$ localement près d'un point $p\in \pa X$ en choisissant une base locale de champs de vecteurs $N,T, Y_1,\ldots, Y_{n-2}$,   définis dans un voisinage $\cU$ de $p$ de sorte que     
\begin{equation}
\begin{aligned}
  d\rho(N)&=1, \; \overline{\Theta}(N)=0, \\
    d\rho(Y_i)&=0, \;\overline{\Theta}(Y_i)=0, \; \; i=1,\ldots,n-2,  \quad n=\dim X,\\
  d\rho(T)&=0, \; \overline{\Theta}(T)=1.
\end{aligned}  
\label{tm.3}\end{equation}
Près de $p$, un élément $\xi\in\cV_{\Theta}(X)$ est alors de la forme 
\begin{equation}
  \xi= a \rho N + \sum_{i=1}^{n-2} b_i \rho Y_i + c\rho^2 T, \quad a,b_1,\ldots, b_{n-2},c\in \CI(X). 
\label{tm.4}\end{equation}

En utilisant ces bases locales, on peut donc définir un fibré vectoriel ${}^{\Theta}TX$ sur $X$ tel que 
\begin{equation}
\cV_{\Theta}(X)=(\iota_{\Theta})_*\CI(X;{}^{\Theta}TX), 
\label{tm.5}\end{equation}
où $\iota_{\Theta}: {}^{\Theta}TX\to TX$ est un morphisme canonique de fibrés vectoriels qui donne lieu à un isomorphisme lorsque restreint à $X\setminus \pa X$ et qui est défini par
$\iota_{\Theta}(v)=0$ pour $v\in \left. {}^{\Theta}TX\right|_{\pa X}$. 

\begin{remarque}
Le fibré ${}^{\Theta}TX$ est isomorphe à $TX$, mais l'application canonique $\iota_{\Theta}$ ne donne pas un isomorphisme entre ces deux fibrés.  Le fibré ${}^{\Theta}TX$ possède en fait une structure d'algébroïde de Lie avec $(\iota_{\Theta})_*$ comme ancrage (anchor map en anglais).  
\end{remarque}

\begin{definition}
Une \textbf{$\Theta$-métrique lisse jusqu'au bord} est une métrique riemannienne $g$ sur l'intérieur de $X$ de la forme 
$$
            g= (\iota^{-1}_{\Theta})^*\left. g_{\Theta}\right|_{X\setminus \pa X},
$$
où $g_{\Theta}\in \CI(X; {}^{\Theta}T^*X\otimes {}^{\Theta}T^*X)$ est une métrique euclidienne pour le fibré vectoriel ${}^{\Theta}TX$.  
\label{tm.6}\end{definition}

Si $d\rho$, $\alpha_1,\ldots,\alpha_{n-2}, \overline{\Theta}\in \CI(\cU,T^*X)$ forment la base locale duale à la base de champs de vecteurs décrite en \eqref{tm.3}, alors 
$\frac{d\rho}{\rho}, \frac{\alpha_1}{\rho}, \ldots, \frac{\alpha_{n-2}}{\rho}, \frac{\overline{\Theta}}{\rho^2} \in \CI(\cU;{}^{\Theta}T^*X)$ est une base locale pour le dual du fibré
${}^{\Theta}TX$.  Dans un tel voisinage, une $\Theta$-métrique lisse jusqu'au bord est donc de la forme
\begin{multline}
      g=  a \frac{d\rho^2}{\rho^2} + \sum_{i,j}\frac{g_{ij}}{\rho^2} \alpha_i\otimes \alpha_j + b\frac{\overline{\Theta}^2}{\rho^4} + \sum_i c_i \left( \frac{d\rho}{\rho}\otimes \frac{\alpha_i}{\rho} + \frac{\alpha_i}{\rho}\otimes \frac{d\rho}{\rho} \right)  \\ + \sum_i d_i \left( \frac{\overline{\Theta}}{\rho^2}\otimes \frac{\alpha_i}{\rho} + \frac{\alpha_i}{\rho}\otimes \frac{\overline{\Theta}}{\rho^2} \right) + e\left(  \frac{\overline{\Theta}}{\rho^2}\otimes \frac{d\rho}{\rho} + \frac{d\rho}{\rho}\otimes \frac{\overline{\Theta}}{\rho^2}\right), 
\label{tm.7}\end{multline}
avec $a,g_{ij},b,c_i,d_i, e\in \CI(\cU)$.  

Comme on peut le vérifier sans trop de difficulté, une $\Theta$-métrique lisse jusqu'au bord est complète de volume infini.  De plus, si $g_1$ et $g_2$ sont deux $\Theta$-métriques lisses jusqu'au bord associées à la même $\Theta$-structure, alors celles-ci sont quasi-isométriques, c'est-à-dire qu'il existe une constante positive $C$ telle que
$$
          \frac{g_1}{C}\le g_2 \le Cg_1.
$$

La notion de $\Theta$-métrique lisse jusqu'au bord est un peu stricte et il est parfois nécessaire de considérer une classe de métriques un peu plus grande.
\begin{definition}
Une \textbf{$\Theta$-métrique} est une métrique riemannienne sur $X\setminus \pa X$ qui est quasi-isométrique à une $\Theta$-métrique lisse jusqu'au bord.     

\end{definition}
Par exemple, sur un domaine strictement pseudoconvexe, la métrique de Cheng-Yau \cite{Cheng-Yau} est quasi-isométrique à une $\Theta$-métrique lisse jusqu'au bord.  Cependant, comme la solution formelle de Fefferman\cite{Fefferman76} l'indique,  la métrique de Cheng-Yau  n'est typiquement pas une $\Theta$-métrique lisse jusqu'au bord.  Les travaux de Lee et Melrose \cite{Lee-Melrose} montrent toutefois qu'elle admet un développement asymptotique polyhomogène près du bord.  Pour décrire précisément ce qu'on entend par développement polyhomogène, rappelons d'abord brièvement la notion d'ensemble indiciel (index set en anglais) \cite{MelroseAPS}.

\begin{definition}
Un ensemble indiciel $E$ est un sous-ensemble discret de $\bbC\times \bbN_0$ tel que 
\begin{gather}
(z_j,k_j)\in E, \quad |(z_j,k_j)|\to \infty \; \Longrightarrow \; \Re z_j \to \infty, \\
\label{tm.8b} (z,k)\in E \; \Longrightarrow (z+p,k)\in E \; \forall \ p\in \bbN,  \\
(z,k)\in E \; \Longrightarrow (z,p)\in E \; \forall \ p\in \{0,\ldots,k\}.
\end{gather}
\label{tm.8}\end{definition}  
À un ensemble indiciel $E$ donné correspond l'espace $\cA^E_{\phg}(X)$ des fonctions $f\in \CI(X\setminus \pa X)$ ayant un développement asymptotique près de $\pa X$ de la forme
\begin{equation}
 f\sim \sum_{(z,k)\in E} a_{(z,k)} \rho^z (\log \rho)^k, \quad a_{(z,k)}\in \CI(X),
\label{tm.9}\end{equation} 
où le symbole $\sim$ signifie que pour tout $N\in\bbN$, on a
\begin{equation}
  f-\underset{\Re z\le N}{\sum_{(z,k)\in E}} a_{(z,k)} \rho^z (\log \rho)^k \in \dot{\cC}^N(X),
\label{tm.10}\end{equation}
où $\dot{\cC}^N(X)$ est l'espace des fonctions $N$ fois différentiables sur $X$ s'annulant sur $\pa X$ ainsi que toutes leurs dérivées jusqu'à l'ordre $N$.  Par la propriété \eqref{tm.8b}, l'espace $\cA^E_{\phg}(X)$ est automatiquement un $\CI(X)$-module.  Si $V$ est un fibré vectoriel sur $X$, on peut donc aussi définir l'espace des sections polyhomogènes de $V$ par rapport à un ensemble indiciel $E$ par
$$
                    \cA^E_{\phg}(X;V)= \cA^E_{\phg}(X) \otimes_{\CI(X)} \CI(X;V).
$$

Remarquons que les coefficients $a_{(z,k)}$ dans le développement asymptotique \eqref{tm.9} dépendent du choix de la fonction de définition du bord $\rho$, mais que l'espace $\cA^E_{\phg}(X)$ reste le même si on change de fonction de définition du bord.    L'espace des fonctions lisses $\CI(X)$ est un cas particulier d'un espace de fonctions polyhomogènes, puisque qu'il correspond à l'espace $\cA^E_{\phg}(X)$ avec $E=\bbN_0\times \{0\}$ comme ensemble indiciel.  En prenant $E=\emptyset$, on obtient plutôt l'espace $\cA^E_{\phg}(X)= \dot{\cC}^{\infty}(X)$  des fonctions lisses sur $X$ s'annulant avec toutes leurs dérivées sur le bord $\pa X$.    

Pour les $\Theta$-métriques, on considérera un type particulier d'ensemble indiciel.

\begin{definition} On dira qu'un ensemble indiciel $E$ est \textbf{positif} si $\bbN_0\times \{0\}\subset E$ et si 
\begin{gather}
(z,k)\in E \: \Longrightarrow \; \Im z=0 \; \mbox{et} \;  \Re z\ge 0,\\
        (0,k)\in E   \; \Longrightarrow \; k=0.      
\end{gather}     
\label{tm.11a}\end{definition}

\begin{definition}
Une \textbf{$\Theta$-métrique polyhomogène} est une $\Theta$-métrique $g$ sur $X\setminus \pa X$ de la forme 
$$
g= (\iota^{-1}_{\Theta})^*\left. g_{\Theta}\right|_{X\setminus \pa X}
$$
pour une métrique euclidienne $g_{\Theta}\in \cA^E_{\phg}(X; {}^{\Theta}T^*X\otimes {}^{\Theta}T^*X)$ avec $E$ un ensemble indiciel positif.  
\label{tm.11}\end{definition}

Outre l'espace des fonctions lisses $\CI(X)$ et celui des fonctions polyhomogènes par rapport à un ensemble indiciel $E$, on peut introduire des espaces fonctionnels adaptés à la $\Theta$-structure.  Pour ce faire, fixons une $\Theta$-métrique $g$ et soit $\nabla$ sa connexion de Levi-Civita.  Cela induit une métrique euclidienne et une connexion euclidienne (qu'on dénotera aussi $g$ et $\nabla$) sur le fibré vectoriel
$$
       T^r_s(X\setminus \pa X)= \underbrace{T(X\setminus \pa X)\otimes \cdots \otimes T(X\setminus\pa X)}_{r \; \mbox{fois}}\otimes  \underbrace{T^*(X\setminus \pa X)\otimes \cdots \otimes T^*(X\setminus\pa X)}_{s \; \mbox{fois}}
$$  
des $(r,s)$-tenseurs.  Plus généralement, si $V\to X\setminus \pa X$ est un fibré vectoriel euclidien lisse muni d'une connexion euclidienne, alors le fibré vectoriel euclidien
$T^r_sX\otimes V$ a une connexion euclidienne induite que l'on dénotera à nouveau $\nabla$.  Pour $k\in \bbN_0$, on dénotera alors par $\cC_{\Theta}^k(X\setminus\pa X;V)$ l'espace des sections continues $\sigma$ de $V$ telles que 
\begin{equation}
   \nabla^j \sigma \in \cC^0(X\setminus\pa X;T^0_j(X\setminus \pa X) \otimes V) \quad \mbox{et} \quad \sup_{p\in X\setminus \pa X}| \nabla^j \sigma(p) |_g < \infty,  \quad \forall \ j\in \{0,\ldots,k\},
\label{tm.13}\end{equation}
où $|\cdot |_g$ est la norme définie par la métrique euclidienne sur $T^r_s(X\setminus\pa X) \otimes V$.  L'espace $\cC^k_{\Theta}(X;V)$ est un espace de Banach ayant pour norme 
\begin{equation}
        \| \sigma \|_{k}= \sum_{j=0}^k \sup_{p\in X\setminus \pa X} |\nabla^j \sigma(p)|_g.
\label{tm.14}\end{equation}
En prenant l'intersection sur tous les $k\in \bbN$, on obtient un espace de Fréchet
\begin{equation}
    \cC^{\infty}_{\Theta}(X\setminus \pa X;V) = \bigcap_{k\in \bbN}  \cC^k_{\Theta}(X\setminus \pa X;V)
\label{tm.15}\end{equation}
ayant pour semi-normes $\|\cdot\|_{k}$ pour $k\in \bbN$.  Dans la base locale \eqref{tm.3}, on a que la restriction sur $\cU\cap (X\setminus \pa X)$ d'une section $\sigma\in \CI_{\Theta}(X;V)$ est telle que
$$
       \sup_{\cU\cap(X\setminus\pa X)}  | (\rho N)^i (\rho^2 T)^j \rho^{|\nu|}Y^{\nu} \sigma  | <\infty,  \quad \forall \ i,j\in\bbN_0, \;  \forall \nu\in \bbN_0^{n-2}.
$$
\begin{exemple}
Si $\rho\in \CI(X)$ est une fonction de définition du bord telle que $\rho(p)<1$ pour tout $p\in X$, alors pour $k\in\bbN$, la fonction $(\log \rho)^{-k}$ n'est pas une fonction lisse sur $X$, mais  elle est un élément de $\CI_{\Theta}(X\setminus \pa X)$.
\label{tm.15b}\end{exemple}

 Pour $\alpha\in (0,1]$,  il est aussi utile de définir l'espace de Hölder  $\cC^{0,\alpha}_{\Theta}(X\setminus \pa X;V)$ comme l'espace des sections continues $\sigma$ de $V$ telles que
 \begin{equation}
     \| \sigma\|_{0,\alpha}:= \|\sigma\|_{0}+ \sup   \left\{ \frac{|  P_{\gamma}(\sigma(\gamma(0))) - \sigma(\gamma(1))|_g}{  \ell(\gamma)^\alpha} \quad | \quad \gamma\in \CI([0,1];X\setminus \pa X), \; \gamma(0)\neq \gamma(1)\right\}<\infty,
 \label{tm.16}\end{equation}
où $P_{\gamma}: \left. V\right|_{\gamma(0)} \to \left. V\right|_{\gamma(1)}$  est le transport parallèle le long de $\gamma$ et $\ell(\gamma)$ est la longueur de $\gamma$ par rapport à la métrique $g$.  On peut aussi définir l'espace de Hölder $\cC^{k,\alpha}_{\Theta}(X\setminus \pa X;V)$ comme étant donné par les sections $\sigma\in \cC^k_{\Theta}(X\setminus \pa X; V)$ telles que $\nabla^k\sigma\in \cC^{0,\alpha}_{\Theta}(X\setminus \pa X; T^0_k(X\setminus \pa X)\otimes V)$.  L'espace de Hölder $\cC^{k,\alpha}_{\Theta}(X\setminus \pa X;V)$ est un espace de Banach avec norme donnée par
$$
        \| \sigma \|_{k,\alpha}:=  \| \sigma \|_{k-1} + \| \nabla^k\sigma \|_{0,\alpha}.
$$

On peut aussi introduire une version parabolique de ces espaces.  Dénotons aussi par $V$ l'image inverse de $V$ par la projection $[0,T]\times X\to X$.  On définit alors 
$\cC^{k}_{\Theta}([0,T]\times X\setminus \pa X;V)$ comme étant l'espace des sections $\sigma$ de $V$ telles que pour tous $i,j \in \bbN_0$ avec $2i+j\le k$, on ait
$$
 \pa_t^i \nabla^j  \sigma \in \cC^0([0,T]\times X\setminus \pa X; T^0_j(X\setminus \pa X)\otimes V) \quad \mbox{et} \quad \sup_{t\in [0,T]} \sup_{p\in X\setminus \pa X} | \pa_t^i \nabla^j\sigma(t,p)|_g < \infty.
$$
C'est un espace de Banach avec norme donnée par
$$
  \| \sigma\|_{k}:= \sum_{2i+j\le k}\sup_{t\in [0,T]} \sup_{p\in X\setminus \pa X} | \pa_t^i \nabla^j\sigma(t,p)|_g.
$$ 
On dénote par
$$
      \CI_{\Theta}([0,T]\times X\setminus \pa X;V) = \bigcap_{k=0}^{\infty}\cC^k_{\Theta}([0,T]\times X\setminus \pa X;V)
$$
l'espace de Fréchet associé.  
Plus généralement, pour $\alpha\in (0,1]$, on peut considérer l'espace de Hölder parabolique $\cC^{0,\alpha}_{\Theta}([0,T]\times X\setminus \pa X; V)$ constitué des sections continues
$\sigma\in \cC^0([0,T]\times X\setminus \pa X; V)$ telles que 
\begin{multline}
 \| \sigma\|_{0,\alpha}:= \|\sigma\|_{0}+ \sup_{t\in[0,T]} \sup \left\{ \frac{|  P_{\gamma}(\sigma(\gamma(0))) - \sigma(\gamma(1))|_g}{  \ell(\gamma)^\alpha}\quad | \quad \gamma\in \CI([0,1];\{t\}\times X\setminus \pa X), \; \gamma(0)\neq \gamma(1) \right\} \\+ \sup_{x\in X\setminus\pa X}  \sup_{t\neq t'} \frac{|\sigma(t,x)-\sigma(t',x)|}{|t-t'|^{\frac{\alpha}{2}}} <\infty.
 \end{multline}
Enfin, pour $k\in\bbN$ et $\alpha\in (0,1]$, on définit l'espace de Hölder parabolique $\cC^{k,\alpha}_{\Theta}([0,T]\times X\setminus \pa X ; V)$ comme étant l'espaces des sections
$\sigma\in \cC^{k}_{\Theta}([0,T]\times X\setminus \pa X;V)$ telles que
$$
  \| \sigma \|_{k,\alpha}:= \| \sigma \|_{k-1} + \|\nabla^k\sigma\|_{0,\alpha} < \infty.
$$
C'est un espace de Banach avec norme donnée par $\|\cdot \|_{k,\alpha}$.

\section{Décroissance de solutions d'équations $\Theta$-paraboliques} \label{depar.0}

À partir de l'algèbre de Lie $\cV_{\Theta}(X)$, on peut aussi définir l'espace des $\Theta$-opérateurs différentiels.  

\begin{definition}
L'espace $\Diff^k_{\Theta,\infty}(X)$ des $\Theta$-opérateurs différentiels lisses jusqu'au bord agissant sur $\CI(X)$ est engendré par $\CI(X)$ et la composition d'au plus $k$ 
champs de vecteurs de $\cV_{\Theta}(X)$.  Plus généralement, si $E$ est un ensemble indiciel, on définit alors par
$$
              \Diff^k_{\Theta,E}(X)= \cA^E_{\phg}(X) \otimes_{\CI(X)}  \Diff^k_{\Theta,\infty}(X)
$$
l'ensemble des $\Theta$-opérateurs différentiels qui sont polyhomogènes par rapport à $E$. 
\label{tm.12}\end{definition}

Un exemple naturel de $\Theta$-opérateur différentiel lisse jusqu'au bord est donné par le laplacien associé à une $\Theta$-métrique lisse jusqu'au bord.  Si la $\Theta$-métrique est polyhomogène par rapport à un ensemble indiciel $E$, alors son laplacien sera un $\Theta$-opérateur différentiel polyhomogène par rapport à un ensemble indiciel contenant $E$, mais possiblement un peu plus grand.  Comme nous aurons affaire à des $\Theta$-métriques qui ne sont pas nécessairement lisses jusqu'au bord ou polyhomogènes, il nous faudra considérer une classe d'opérateurs un peu plus grande que les $\Theta$-opérateurs différentiels polyhomogènes.  
\begin{definition}
Soit $V\to X$ un fibré vectoriel euclidien muni d'une connexion euclidienne.  On dénote par $\Diff^k_{\Theta}(X\setminus \pa X;V)$ l'espace des opérateurs différentiels P d'ordre $k$ de la forme
$$
         P\sigma= \sum_{j=0}^k a_j \cdot \nabla^j \sigma,  \quad a_j\in \CI_{\Theta}(X\setminus \pa X; T^j_0(X\setminus \pa X)\otimes \End(V)), \quad \sigma\in \CI_{\Theta}(X\setminus \pa X; V),
 $$ 
 où $a_j\cdot \nabla^j \sigma$ est vu comme un élément de $\CI_{\Theta}(X\setminus\pa X;V)$ et où $\nabla$ est la connexion induite par la connexion euclidienne de $V$ et la connexion de Levi-Civita d'une $\Theta$-métrique lisse jusqu'au bord fixée.   Le \textbf{symbole principal} d'un opérateur $P\in \Diff^k_{\Theta}(X\setminus \pa X;V)$, dénoté $\sigma_k(P)$, est $i^k a_k$, où $a_k$ est le coefficient du terme d'ordre $k$.  Comme la notation le suggère, $\sigma_k(P)$ ne dépend pas du choix de la connexion utilisée pour décrire l'opérateur $P$.       
\label{tm.13}\end{definition}

Remarquons que $\Diff^k_{\Theta,\infty}(X;V)$ est un sous-espace de $\Diff^k_{\Theta}(X\setminus \pa X;V)$ donné par les opérateurs de $\Diff^k_{\Theta}(X\setminus \pa X;V)$
dont les coefficients $a_j$ sont en fait des éléments de $\CI(X;{}^{\Theta}T^j_0(X)\otimes V)$.

\begin{definition}
Un opérateur $L\in \Diff^2_{\Theta}(X\setminus \pa X;V)$ est \textbf{uniformément $\Theta$-elliptique} si le symbole principal $\sigma_2(L)\in \CI(X\setminus \pa X; T(X\setminus \pa X) \otimes T(X\setminus \pa X)\otimes V)$ est tel que pour tout $x\in X\setminus \pa X$, 
$$
      \sigma_2(L)(\xi,\xi)=  -g^{ij}\xi_i\xi_j \quad \forall \ \xi\in T^*_x(X\setminus \pa X)\setminus \{0\},
$$
pour une certaine $\Theta$-metric $g$.  Autrement dit, $L$ est uniformément $\Theta$-elliptique si son symbole principal est le même que celui d'un laplacien associé à une $\Theta$-métrique.  De même, si $[0,T] \ni t\to L_t\in  \Diff^2_{\Theta}(X\setminus \pa X;V)$ est une famille lisse d'opérateurs uniformément $\Theta$-elliptiques,  on dira que l'opérateur $\frac{\pa}{\pa t}- L_t$ est \textbf{uniformément $\Theta$-parabolique}.
\label{tm.14}\end{definition}

\begin{proposition}
Soit $V$ un fibré vectoriel euclidien sur $X$.  Pour $t\in [0,T]$, soit $t\mapsto L_t\in \Diff^2_{\Theta}(X\setminus \pa X;V)$ une famille lisse d'opérateurs telle que l'opérateur $\pa_t - L_t$ soit uniformément $\Theta$-parabolique.  Alors pour $u_0\in \cC^{k+2,\alpha}_{\Theta}(X\setminus \pa X;V)$ et $f\in\cC^{k,\alpha}_{\Theta}([0,T]\times (X\setminus \pa X;V))$ , l'équation
$$
             \frac{\pa u}{\pa t} - L_t u =f, \quad \left. u\right|_{t=0}= u_0,
$$
possède une solution $u\in\cC^{k+2,\alpha}_{\Theta}([0,T]\times X\setminus \pa X;V)$.  De plus, il existe une constante $C>0$ dépendant de la famille $L_t$ telle que
$$
          \| u\|_{k+2,\alpha}\le C\left(\|u_0\|_{k+2,\alpha}+ \| f\|_{k,\alpha}\right).
$$
\label{par.2}\end{proposition}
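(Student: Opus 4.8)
The plan is to regard $\pa_t - L_t$ as a uniformly parabolic operator on the complete manifold $(X\setminus\pa X, g)$, which I will argue is a manifold of bounded geometry, and then to combine interior parabolic Schauder estimates on a uniform atlas with a patching argument to get the global a priori bound; existence is obtained separately by exhausting $X\setminus\pa X$ by compact pieces and passing to a limit.

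First I would check that for any $\Theta$-metric $g$ the open manifold $(X\setminus\pa X, g)$ has bounded geometry. Since by definition a $\Theta$-metric is quasi-isometric to a $\Theta$-metric smooth up to the boundary, it suffices to treat the latter. In the adapted frame $\rho N, \rho Y_1,\ldots,\rho Y_{n-2}, \rho^2 T$ of \eqref{tm.3}, which is a global frame of $\cV_{\Theta}(X)$ of uniformly bounded $g$-length and whose iterated Lie brackets again lie in $\cV_{\Theta}(X)$ with coefficients smooth up to the boundary, one computes that the curvature and all its covariant derivatives are bounded and that the injectivity radius is bounded below by some $r_0>0$. This yields a uniform atlas of geodesic charts of radius $r_0$ in which the components of $g$, and hence the coefficients $a_j\in\CI_{\Theta}$ of $L_t$, are bounded in every $\cC^{k,\alpha}$ norm independently of the chart and of $t\in[0,T]$. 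Moreover, the intrinsic $\Theta$-H\"older norms of \eqref{tm.16}, defined through parallel transport, are uniformly equivalent to the supremum over this atlas of the ordinary Euclidean $\cC^{k,\alpha}$ norms (pairs of points at distance larger than $r_0$ contributing only a term controlled by the $\cC^0$ norm).

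With this atlas in hand, in each chart the equation $\pa_t u - L_t u = f$ becomes a uniformly parabolic system on a fixed space-time box $B_{r_0}\times[0,T]\subset\bbR^n\times[0,T]$, with coefficients bounded in $\cC^{k,\alpha}$ and with ellipticity constants controlled, through the uniform $\Theta$-parabolicity, by the quasi-isometry constant of the $\Theta$-metric defining $\sigma_2(L_t)$. The classical interior parabolic Schauder estimate then applies in each chart with a single constant, and taking the supremum over the atlas using its uniform local finiteness gives the global a priori bound $\|u\|_{k+2,\alpha}\le C(\|u_0\|_{k+2,\alpha}+\|f\|_{k,\alpha})$. For existence I would fix an exhaustion $X_1\subset X_2\subset\cdots$ of $X\setminus\pa X$ by compact manifolds with boundary and solve on each $X_j\times[0,T]$ the initial-boundary value problem with data $u_0,f$ and a homogeneous Dirichlet condition on $\pa X_j$, which is solvable by the standard parabolic theory on compact domains. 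The uniform interior estimates bound the $u_j$ in $\cC^{k+2,\alpha}_{\Theta}$ on each fixed compact set independently of $j$, so by an Arzel\`a-Ascoli argument a subsequence converges in $\cC^{k+2,\alpha'}_{\Theta}$ for $\alpha'<\alpha$, uniformly on compact sets, to a solution $u$ on all of $(X\setminus\pa X)\times[0,T]$; taking the supremum of the uniform local bounds shows $u\in\cC^{k+2,\alpha}_{\Theta}$ and delivers the stated estimate.

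The main obstacle is the non-compactness together with the fact that $\pa X$ lies at infinite $g$-distance: one must guarantee that the local Schauder constants remain uniform all the way to infinity and that the solutions built on the exhaustion neither degenerate nor lose control near the boundary at infinity. This is precisely what the bounded-geometry structure provides, so the principal effort of the proof is the verification of bounded geometry for $\Theta$-metrics and the attendant uniform equivalence of the $\Theta$-H\"older norms with the chartwise Euclidean norms; once these are established, the existence and the estimate follow from classical parabolic theory applied uniformly across the atlas.
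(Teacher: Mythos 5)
Your proposal is correct and follows essentially the same route as the paper: the paper likewise solves Dirichlet initial-boundary value problems on an exhaustion $\Omega_j$ of $X\setminus \pa X$ (citing Ladyzhenskaya--Solonnikov--Ural'tseva), extracts a limit by Arzel\`a--Ascoli, and derives the global estimate from local parabolic Schauder estimates by invoking the bounded geometry of $\Theta$-metrics in the sense of Cheng--Yau (citing Lee--Melrose), exactly as you do. The only differences are cosmetic (the paper first reduces to $u_0=0$ and localizes $f$ with cutoffs $\psi_j$, and it cites bounded geometry rather than verifying it), with one caveat: your reduction ``quasi-isometric to a smooth $\Theta$-metric, hence bounded geometry'' is not literally valid since bounded geometry is not a quasi-isometry invariant; what is actually needed, and what your argument in fact uses, is bounded geometry of the smooth-up-to-the-boundary background metric defining the spaces and the connection, with quasi-isometry entering only to control the ellipticity constants of $\sigma_2(L_t)$.
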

\begin{proof}
C'est un résultat standard.  Rappelons brièvement sa démonstration.  D'abord, en considérant $u-u_0$ au lieu de $u$, on peut toujours se ramener au cas où $u_0=0$.  Soit $\{\Omega_j\}_{j\in \bbN}$ une suite d'ouverts relativement compacts  avec bord lisse recouvrant $X\setminus \pa X$ et tels que $\Omega_j\subset \Omega_{j+1}$ pour tout $j\in\bbN$.  Pour chacun de ces ouverts, soit $\psi_j \in \CI_c(\Omega_{j+1})$ une fonction telle que $\psi_j\equiv 1$ sur $\Omega_j$ et $\sup_{\Omega_{j+1}} |\psi_j| =1$.  Pour chaque $j$, on sait alors par le Théorème~7.1 dans \cite[Chapiter~7, \S7]{LSU1968}  que l'équation
$$
         \frac{\pa \phi_j}{\pa t}= L_t \phi_j + \psi_j f, \quad \phi_j(0,\cdot)=0, \; \left. \phi_j\right|_{\pa \Omega_{j+1}\times[0,T]} =0,
$$   
possède une unique solution $\phi_j\in \cC^{k+2,\alpha}([0,T]\times \Omega_{j+1};V)$.  
En mettant  $L_t$ sous la forme
 $$
 L_t= \sum_{k=0}^2 a_k(t)\cdot \nabla^k, 
$$
où au temps $t$, $\nabla$ est la connexion induite par un choix de connexion euclidienne pour $V$ indépendante du temps et la connexion de Levi-Civita pour la $\Theta$-metric $g(t)$  induite par $\sigma_{2}(L_t)$, alors en fait
\begin{equation}
   L_t v= \Delta_{g(t)}v + a_1\nabla v + a_0 v.
\label{max.3}\end{equation}
Pour $j,\ell\in\bbN$, on voit donc que sur $\Omega_{\ell+1}$,
\begin{equation}
      \frac{\pa}{\pa t} (\psi_{\ell}\phi_j) = L_t(\psi_{\ell} \phi_j) - (\Delta_{g(t)}\psi_{\ell})\phi_j -2\langle\nabla\psi_{\ell}, \nabla\phi_j\rangle_{g(t)} -a_1\cdot (\nabla\psi_{\ell})\phi_j + \psi_{\ell}\psi_j f, \quad \left. \psi_{\ell}\phi_j\right|_{\pa \Omega_{\ell+1}\times[0,T]} =0.
\label{max.2}\end{equation}
Par la discussion dans \cite[p. 320 et p.584]{LSU1968}, on déduit de \eqref{max.2}  des estimations indépendantes de $j$ sur les normes $\cC^{k+2,\alpha}_{\Theta}$ des $\psi_{\ell}\phi_j$.  Par le théorème d'Arzela-Ascoli, il existe donc une sous-suite $\{\phi_{j_q}\}_{q\in\bbN}$ et une solution $u\in \cC^{k+2,\alpha}_{\Theta}(X\setminus \pa X;V)$ telle que $\phi_{j_q}\to u$ dans $\cC^{k+2,\alpha}_{\Theta}([0,T]\times X\setminus \pa X;V)$ uniformément sur des sous-ensembles compacts de $X\setminus \pa X$.   
Pour obtenir l'estimation pour $\| u\|_{k+2,\alpha}$, notons que, comme une $\Theta$-métrique est un exemple de géométrie bornée (bounded geometry en anglais) au sens de Cheng-Yau \cite{Cheng-Yau}, \cf \cite[p.171]{Lee-Melrose}, on peut obtenir l'estimation voulue à partir de l'estimation de Schauder parabolique locale.  

\end{proof}

Ce dernier résultat se généralise sans trop de difficulté aux espaces de Hölder paraboliques à poids, ce qui permet du même coup de donner une preuve assez facile de l'unicité de la solution.  
\begin{corollaire}
Soit $V$ un fibré vectoriel euclidien sur $X$.  Pour $t\in [0,T]$, soit $t\mapsto L_t\in \Diff^2_{\Theta}(X\setminus \pa X;V)$ une famille lisse d'opérateurs telle que l'opérateur $\pa_t - L_t$ soit uniformément $\Theta$-parabolique.  Soit $\rho\in\CI(X)$ une fonction de définition du bord.  Alors pour $u_0\in \rho^{\beta}(\log \rho)^{\ell}\cC^{k+2,\alpha}_{\Theta}(X\setminus \pa X;V)$ et $f\in\rho^{\beta}(\log \rho)^{\ell}\cC^{k,\alpha}_{\Theta}([0,T]\times (X\setminus \pa X);V)$  avec $\beta> 0$ et $\ell\in\bbN_0$ (ou $\beta=\ell=0$), l'équation
$$
             \frac{\pa u}{\pa t} - L_t u =f, \quad \left. u\right|_{t=0}= u_0,
$$
possède une solution unique $u\in \rho^\beta(\log \rho)^{\ell}\cC^{k+2,\alpha}_{\Theta}([0,T]\times X\setminus \pa X;V)$.  De plus, il existe une constante $C>0$ dépendant de la famille $L_t$ telle que
\begin{equation}
          \left\| \frac{u}{\rho^{\beta}(\log \rho)^{\ell}}\right\|_{k+2,\alpha}\le C\left(  \left\| \frac{u_0}{\rho^{\beta}(\log \rho)^{\ell}}\right\|_{k,\alpha}  + \left\| \frac{f}{\rho^{\beta}(\log \rho)^{\ell}}\right\|_{k,\alpha}\right).
\label{par.3a}\end{equation}
\label{par.3}\end{corollaire}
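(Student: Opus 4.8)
The plan is to reduce the weighted statement to the unweighted Proposition~\ref{par.2} by conjugating with the weight $w := \rho^{\beta}(\log\rho)^{\ell}$, and to extract uniqueness from the same reduction together with a maximum principle. Since $w$ does not depend on $t$, setting $v := u/w$ transforms the equation $\pa_t u - L_t u = f$ into $\pa_t v - \widetilde L_t v = \widetilde f$ with $\widetilde L_t := w^{-1}\circ L_t\circ w$ and $\widetilde f := w^{-1} f$, and $\left. v\right|_{t=0} = w^{-1}u_0 =: v_0$. By the very definition of the weighted spaces, $v_0 \in \cC^{k+2,\alpha}_{\Theta}(X\setminus\pa X;V)$ and $\widetilde f \in \cC^{k,\alpha}_{\Theta}([0,T]\times X\setminus\pa X;V)$, so once I know that $\pa_t - \widetilde L_t$ is again uniformly $\Theta$-parabolique, Proposition~\ref{par.2} produces a solution $v\in\cC^{k+2,\alpha}_{\Theta}$ with $\|v\|_{k+2,\alpha}\le C(\|v_0\|_{k+2,\alpha}+\|\widetilde f\|_{k,\alpha})$; then $u := wv$ solves the original problem, lies in $\rho^{\beta}(\log\rho)^{\ell}\cC^{k+2,\alpha}_{\Theta}$, and the transported inequality reproduces \eqref{par.3a}.

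The crux is therefore to check that $\widetilde L_t \in \Diff^2_{\Theta}(X\setminus\pa X;V)$ has the same principal symbol as $L_t$. Conjugation leaves the top-order term untouched and only adds first- and zeroth-order terms, whose coefficients are built from the logarithmic derivatives $\frac{\nabla w}{w}$ and $\frac{\nabla^2 w}{w}$ of the weight. Expressing these through $\Theta$-champs de vecteurs, the key computation is that for $\xi\in\cV_{\Theta}(X)$ one has $\xi\rho = a\rho$ with $a\in\CI(X)$, by \eqref{tm.4}; hence $\xi\log\rho = a$ and $\xi\log(-\log\rho) = a/\log\rho$ are bounded, and all their iterated $\Theta$-derivatives are bounded as well (the $\log\rho$ in the denominator only helps, \cf Example~\ref{tm.15b}). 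Consequently $\xi\log w = \beta a + \ell\,a/\log\rho$, together with the second-order combination $\frac{\xi^2 w}{w} = \xi^2\log w + (\xi\log w)^2$, lies in $\CI_{\Theta}$. This shows the extra coefficients of $\widetilde L_t$ are $\CI_{\Theta}$, so $\widetilde L_t\in\Diff^2_{\Theta}$ with $\sigma_2(\widetilde L_t) = \sigma_2(L_t)$, and uniform $\Theta$-parabolicity is preserved in the sense of Definition~\ref{tm.14}.

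For uniqueness it suffices, after the same conjugation, to show that a bounded solution $v\in\cC^{k+2,\alpha}_{\Theta}$ of the homogeneous problem $\pa_t v - \widetilde L_t v = 0$ with $\left. v\right|_{t=0}=0$ vanishes. Writing $\widetilde L_t = \Delta_{g(t)} + a_1\nabla + a_0$ as in \eqref{max.3} and applying the Bochner identity to $\phi := |v|_g^2$ gives, after a Cauchy--Schwarz bound on the $a_1$-term, the differential inequality $(\pa_t - \Delta_{g(t)})\phi \le C\phi$ with $\phi\ge0$ bounded and $\left. \phi\right|_{t=0}=0$. Substituting $\psi := e^{-Ct}\phi$ turns this into $(\pa_t - \Delta_{g(t)})\psi \le 0$; since a $\Theta$-m\'etrique is of bounded geometry (the fact already used in the proof of Proposition~\ref{par.2}), the underlying manifold is stochastiquement complet, so the weak parabolic maximum principle forces the bounded nonnegative subsolution $\psi$ with zero initial data to vanish, whence $v\equiv 0$. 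I expect the main obstacle to be precisely this uniqueness step: controlling the solution out to infinity requires the bounded-geometry input (or, equivalently, an exhaustion-with-cutoffs energy argument), the verification of symbol invariance under conjugation being comparatively routine once the boundedness of the $\Theta$-derivatives of $\log w$ is established.
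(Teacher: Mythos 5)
Your existence argument coincides with the paper's proof: the paper performs exactly your conjugation $\widetilde{L}_t=\rho^{-\beta}(\log\rho)^{-\ell}\circ L_t\circ\rho^{\beta}(\log\rho)^{\ell}$, observes (via l'Exemple~\ref{tm.15b}) that $\widetilde{L}_t$ stays in $\Diff^2_{\Theta}(X\setminus\pa X;V)$ with $\sigma_2(\widetilde{L}_t)=\sigma_2(L_t)$, and then applies la Proposition~\ref{par.2}; your computation of $\xi\log w$ for $\xi\in\cV_{\Theta}(X)$ is precisely the detail the paper leaves implicit behind that example. Where you genuinely diverge is uniqueness. The paper never invokes stochastic completeness or a weak maximum principle for merely bounded subsolutions: it first multiplies the two solutions by $\rho^{\beta}$ with $\beta>0$ large (conjugating $L_t$ into $\rho^{\beta}L_t\rho^{-\beta}$, still uniformly $\Theta$-parabolic), which reduces to the case where $|u_1|$ and $|u_2|$, hence $|u_1-u_2|$, decay to $0$ at infinity; the differential inequality \eqref{max.1}, which is the same Cauchy--Schwarz computation you perform, is then closed by the elementary maximum principle, since a decaying function attains its supremum. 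Your route --- keeping $v$ only bounded and using that bounded geometry implies stochastic completeness, so that a bounded nonnegative subsolution with zero initial data must vanish --- is also correct, but it carries the extra burden you yourself flag: the weak parabolic maximum principle must be justified for the time-dependent family $g(t)$ (uniform quasi-isometry and bounded geometry of the $\Theta$-metrics do make this work, by your exhaustion-with-cutoffs argument or Karp--Li type criteria), whereas stochastic completeness as usually stated concerns a fixed metric. In short, the paper's second conjugation buys elementarity --- decay at infinity costs nothing, and then no global machinery is needed --- while your version buys directness (one conjugation instead of two) at the price of a strictly stronger maximum principle.
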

\begin{proof}
Par l'Exemple~\ref{tm.15b}, on voit que la famille d'opérateurs conjugués 
$$
       \widetilde{L}_t:= \rho^{-\beta}(\log \rho)^{-\ell}\circ L_t \circ \rho^{\beta} (\log \rho)^{\ell}
$$
est aussi une famille lisse d'opérateurs dans $\Diff^2_{\Theta}(X\setminus \pa X)$.  De plus, on voit aisément que $\sigma_2(\widetilde{L}_t)= \sigma_2(L_t)$, de sorte que 
$\pa_t -\widetilde{L}_t$  est aussi uniformément $\Theta$-parabolique.  En posant, 
$$
         \widetilde{u}= \frac{u}{\rho^{\beta}(\log \rho)^{\ell}}, \quad \widetilde{u_0}= \frac{u_0}{\rho^{\beta}(\log \rho)^{\ell}}, \quad \widetilde{f}= \frac{f}{\rho^{\beta}(\log \rho)^{\ell}},
$$
on se ramène donc à l'équation
$$
   \frac{\pa \widetilde{u}}{\pa t}- \widetilde{L}_t \widetilde{u}= \widetilde{f}, \quad \left. \widetilde{u}\right|_{t=0}= \widetilde{u}_0
$$
 et l'existence de la solution découle de la proposition précédente.

Pour ce qui est de l'unicité de la solution, supposons que $u_1$ et $u_2$ soient deux solutions.  Il faut montrer que $u_1=u_2$ .    D'abord, en remplaçant $u_1$, $u_2$, $L_t$ et $f$ par $\rho^{\beta}u_1$, $\rho^{\beta}u_2$, $\widetilde{L}_t=\rho^{\beta} L_t \rho^{-\beta}$ et $\rho^{\beta}f$ avec $\beta>0$ suffisamment grand, on peut toujours se ramener au cas où $|u_1|$ et $|u_2|$ décroissent vers 0 à l'infini.    En posant $u= u_1-u_2$, on voit alors que 
 $$
     \pa_t u= L_t u, \quad \quad \left. u\right|_{t=0}= 0.
$$
En mettant $L_t$ sous la forme \eqref{max.3}, un calcul simple montre alors que
\begin{equation}
\begin{aligned}
     \frac{\pa}{\pa t} |u|^2 &= \Delta_{g(t)}|u|^2 -2|\nabla u|^2_{g(t)} + 2\langle u, a_1\cdot \nabla u\rangle +2 a_0 |u|^2   \\
            &= \Delta_{g(t)}|u|^2 -2|\nabla u|^2_{g(t)} + 2\langle a_1^*\cdot u,  \nabla u\rangle_{g(t)} +2 a_0 |u|^2   \\ 
            &\le \Delta_{g(t)}|u|^2 -2|\nabla u|^2_{g(t)} +  |a_1^*\cdot u|^2_{g(t)} +  |\nabla u|^2_{g(t)} +2 a_0 |u|^2  \\
            &\le     \Delta_{g(t)}|u|^2 + C|u|^2       
            \end{aligned}     
\label{max.1}\end{equation}
pour une constante $C>0$ dépendant de $a_0$ et $a_1^*$. 
Puisque $|u|$ décroît vers 0 à l'infini, on peut appliquer le principe du maximum à cette équation pour conclure que $u\equiv 0$, donc que $u_1=u_2$. 
 
\end{proof}

On peut utiliser ces résultats pour montrer que les solutions de certaines équations uniformément $\Theta$-paraboliques sont lisses jusqu'au bord ou polyhomogènes.  Pour ce faire, il faut considérer la somme et l'union d'ensembles indiciels.  
Si $E$ et $F$ sont deux ensembles indiciels, on définit leur somme par
\begin{equation}
    E+F= \{ (z_1+z_2,k_1+k_2)\in\bbC \; | \; (z_1,k_1)\in E, \; (z_2,k_2)\in F\}.
\end{equation}
On vérifie aisément que $E+F$ est un ensemble indiciel.  Avec cette opération de sommation, les ensembles indiciels constituent un semi-groupe commutatif.   On vérifie de même que l'union $E\cup F$ est un ensemble indiciel.  
L'intérêt de ces opérations sur les ensembles indiciels vient du fait que 
$$
  f\in \cA^E_{\phg}(X), \; g\in\cA^F_{\phg}(X)\quad \Longrightarrow \quad f+g\in \cA_{\phg}^{E\cup F}(X) \; \mbox{et} \; fg \in \cA^{E+F}_{\phg}(X).
$$ 
   
Si $E$ est un ensemble indiciel positif (voir la Définition~\ref{tm.11a}), on peut lui associer l'ensemble indiciel
$$
                E_{\infty}= \sum_{j=1}^{\infty} E = \bigcup_{n=1}^{\infty} \sum_{j=1}^{n} E.
$$
Par construction, cet ensemble indiciel est tel que $E_\infty +E= E_{\infty}$.  Remarquons enfin que si $E$ et $F$ sont des ensembles indiciels positifs, alors $E\cup F\subset E+F$. 

\begin{theoreme}
Soit $V\to X$ un fibré vectoriel euclidien.  Pour $t\in [0,T]$ et $E$ un ensemble indiciel positif, soit $t\mapsto L_t\in \Diff^2_{\Theta,E}(X;V)$ une famille lisse d'opérateurs telle que l'opérateur $\pa_t - L_t$ soit uniformément $\Theta$-parabolique.  Alors pour $u_0\in \cA^{F}_{\phg}(X;V)$ et $f\in\cA^{G}_{\phg}([0,T]\times X;V)$ avec $F$ et $G$ des ensembles indiciels positifs, l'équation
$$
             \frac{\pa u}{\pa t} - L_t u =f, \quad \left. u\right|_{t=0}= u_0
$$
possède une solution unique  $u\in \cA^{H}_{\phg}([0,T]\times X; V)$, où $H$ est l'ensemble indiciel donné par 
$$
           H= E_\infty + (F \cup G).
$$
\label{par.4}\end{theoreme} 

\begin{proof}
Par le Corollaire~\ref{par.3}, on sait que l'équation possède une solution unique $u\in\cC^{\infty}_{\Theta}([0,T]\times X\setminus \pa X;V)$.  Il nous reste donc à montrer que cette solution est en fait contenue dans $\cA^{H}_{\phg}([0,T]\times X;V)$.  D'abord, en remplaçant $u$ par $u-u_0$ si nécessaire, on peut se ramener au cas $u_0=0$, ce que nous supposerons.  Soit $\{z_j\}_{j\in\bbN_0}$ une suite strictement croissante de nombres réels positifs ou nuls avec
\begin{gather}
        z_0=0 \; \mbox{et} \;  (z_j,0)\in H \;\forall j\in \bbN, \\
         (z,k)\in H\; \Longrightarrow \; z=z_j \; \mbox{pour un certain} \; j\in\bbN_0.  
\end{gather}
Pour montrer que $u$ est bien un élément de $\cA^H_{\phg}([0,T]\times X;V)$, il suffit donc de montrer que pour tout $j\in\bbN_0$, il existe $u_j\in \cA^H_{\phg}([0,T]\times X;V)$ tel que 
$u_j(0,\cdot)\equiv 0$ et
\begin{equation}
      u-u_j \in \rho^{\nu}\cC^{\infty}_{\Theta}([0,T]\times X\setminus \pa X;V)  \quad \forall \; \nu < z_{j+1}.   
\label{par.5}\end{equation}
Nous allons procéder par induction sur $j$ pour établir ce résultat.  Supposons donc que pour $j\in\bbN_0$, on a déjà trouvé une fonction $u_{j-1}\in \cA^H_{\phg}([0,T]\times X;V)$ avec 
$u_{j-1}(0,\cdot)\equiv 0$ satisfaisant \eqref{par.5}.  Si $j=0$, il est sous-entendu ici qu'on prend alors $u_{-1}=0$, de sorte que $u-u_{-1}\in \cC^{\infty}_{\Theta}([0,T]\times X\setminus \pa X;V)$.

Posons alors $v= u-u_{j-1}$, de sorte que
\begin{equation}
  \frac{\pa v}{\pa t}= L_t v +f^j,  \quad f^j:= f+L_t u_{j-1} -\frac{\pa u_{j-1}}{\pa t}.
\label{par.6}\end{equation}
Puisque $H+E=H$, on a automatiquement que $f^j \in \cA^{H}_{\phg}([0,T]\times X;V)$.  Étant donné que $v\in \rho^{\nu}\CI_{\Theta}([0,T]\times X\setminus \pa X;V)$ pour $\nu<z_j$, on aura aussi que   $f^j\in  \rho^{\nu}\CI_{\Theta}([0,T]\times X\setminus \pa X;V)$ pour $\nu<z_j$.  Cela implique qu'il existe $k\in\bbN_0$ tel que 
$$
f^j\in \rho^{z_j}(\log \rho)^{k}\CI_{\Theta}([0,T]\times X\setminus \pa X;V).
$$
Pour trouver un candidat pour le coefficient du terme d'ordre $\rho^{z_j}(\log \rho)^k$ dans le développement polyhomogène de $v$, on peut prétendre un instant que $v$ est bien polyhomogène avec $v\in\rho^{z_j}(\log \rho)^k\CI_{\Theta}([0,T]\times X\setminus \pa X;V)$ et restreindre l'équation \eqref{par.6} au coefficient d'ordre $\rho^{z_j}(\log \rho^k)$ sur le bord, ce qui donne l'équation d'évolution
\begin{equation}
   \frac{\pa v_{\pa}}{\pa t}= \ell_t v_{\pa} + f^j_{\pa}, \quad v_{\pa}(0,\cdot)\equiv 0,  \quad \ell_t\in\CI([0,T]\times \pa X;\End(V)) , \quad f^j_{\pa} \in \CI([0,T]\times \pa X;V). 
\label{par.7}\end{equation} 
Précisément, si dans la base locale de champs de vecteurs \eqref{tm.3}, on a 
$$
    L_t= \sum_{p,q} a^{pq} \xi_p \xi_q + \sum_p b^p \xi_p +c, \quad \mbox{avec} \; \xi_p = \rho Y_p, p=1,\ldots, n-2, \quad \xi_{n-1}= \rho T \; \mbox{et} \; \xi_n= \rho N,
$$
alors dans l'ouvert $\cU$ où cette base locale est définie, on a 
$$
      \ell_t= \left. \left( a^{nn} z_j^2 + b^n z_j +c  \right)\right|_{\pa X}.
$$
L'équation d'évolution \eqref{par.7} est une équation différentielle ordinaire linéaire d'ordre 1 qui se résout aisément.  Soit $v_{\pa}\in \CI([0,T]\times \pa X;V)$ sa solution.  Soit 
$\Xi: \CI(\pa X;V) \to \CI(X;V)$ une application d'extension, c'est-à-dire que
\begin{equation}
     \left.\Xi(h)\right|_{\pa X}=h \quad \forall \; h\in \CI(\pa X;V).
\label{ext.1}\end{equation}
 Posons alors $w= v- w_{\pa}$ avec $w_{\pa}= \rho^{z_j}(\log \rho)^k \Xi(v_{\pa})$, de sorte que $w$ satisfait à l'équation d'évolution
 $$
   \frac{\pa w}{\pa t} = L_t w + h^j, \quad w(0,\cdot)\equiv 0, \quad \mbox{avec} \quad h^j= f^j + L_t w_{\pa} - \frac{\pa w_{\pa}}{\pa t}. 
 $$
 Comme $f^j$ et $w_{\pa}$ sont des éléments de $\cA^H_{\phg}([0,T]\times X;V)$ et $\rho^{z_j}(\log \rho)^k \CI_{\Theta}([0,T]\times X;V)$, on aura aussi que $h^j$ est un élément de ces deux espaces.  Par définition de $w_\pa$, on voit aisément que le coefficient d'ordre $\rho^{z_j}(\log \rho)^k$ de $h^j$ est en fait nul.  
 
On voit donc que $h^j \in \rho^{z_j}(\log \rho)^{k-1}\CI_{\Theta}([0,T]\times X;V)$.  Par le Corollaire~\ref{par.3}, on a donc que 
 $$
          w\in \rho^{z_j}(\log \rho)^{k-1} \CI_{\Theta}([0,T]\times X\setminus \pa X;V).
 $$  
 En répétant cet argument $k$ fois, on peut donc trouver $b_k, b_{k-1}, \ldots, b_1, b_0\in \CI([0,T]\times X;V)$ avec $b_i(0,\cdot)\equiv 0$ tels que 
 $$
         \hat{v}= v- \sum_{\ell=0}^k  \Xi(b_{\ell}) \rho^{z_j}(\log \rho )^{\ell}    
 $$
 a pour équation d'évolution
 $$
                   \frac{\pa \hat{v}}{\pa t} = L_t \hat{v} + \hat{f}^j, \quad \hat{v}(0,\cdot)\equiv 0, \quad \mbox{avec} \quad \hat{f}^j \in \cA^H_{\phg}([0,T]\times X)\cap \rho^{z_j}\CI_{\Theta}([0,T]\times X\setminus \pa X)
 $$
 et $\left. \rho^{-z_j}\hat{f}^j\right|_{\pa X}=0$.  Cela veut dire que $\hat{f}^j\in \rho^\nu\CI_{\Theta}([0,T]\times X\setminus \pa X)$ pour $\nu < z_{j+1}$, donc par le Corollaire~\ref{par.3}, que 
 $\hat{v}\in \rho^\nu\CI_{\Theta}([0,T]\times X\setminus \pa X)$ pour $\nu < z_{j+1}$.  Il suffit donc de prendre  
 $$
         u_{j}= u_{j-1} + \sum_{\ell=0}^k  \Xi(b_{\ell}) \rho^{z_j}(\log \rho )^{\ell}
 $$
 dans \eqref{par.5}, ce qui termine la démonstration.
\end{proof}

En prenant $E=F=G=\bbN_0$ dans le théorème précédent, on obtient le cas particulier suivant.  

\begin{corollaire}
Pour $t\in [0,T]$, soit $t\mapsto L_t\in \Diff^2_{\Theta,\infty}(X;V)$ une famille lisse d'opérateurs telle que l'opérateur $\pa_t - L_t$ soit uniformément $\Theta$-parabolique.  Alors pour $u_0\in \CI(X;V)$ et $f\in\CI([0,T]\times X;V)$, l'équation
$$
             \frac{\pa u}{\pa t} - L_t u =f, \quad \left. u\right|_{t=0}= u_0
$$
possède une solution unique  $u\in \CI([0,T]\times X;V)$. 
\label{par.8}\end{corollaire}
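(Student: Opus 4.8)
The plan is to obtain this statement as the special case $E = F = G = \bbN_0\times\{0\}$ of Theorem~\ref{par.4}, so the entire argument reduces to identifying the spaces of smooth sections with the appropriate polyhomogeneous spaces and then carrying out the index-set arithmetic. First I would recall, as noted in the discussion following Definition~\ref{tm.8}, that $\cA^{\bbN_0\times\{0\}}_{\phg}(X) = \CI(X)$; tensoring over $\CI(X)$ with $\CI(X;V)$ gives $\cA^{\bbN_0\times\{0\}}_{\phg}(X;V) = \CI(X;V)$, and likewise $\cA^{\bbN_0\times\{0\}}_{\phg}([0,T]\times X;V) = \CI([0,T]\times X;V)$. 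Under the same identification, Definition~\ref{tm.12} gives $\Diff^2_{\Theta,\bbN_0\times\{0\}}(X) = \CI(X)\otimes_{\CI(X)}\Diff^2_{\Theta,\infty}(X) = \Diff^2_{\Theta,\infty}(X)$, so the hypothesis $L_t\in\Diff^2_{\Theta,\infty}(X;V)$ is exactly the hypothesis of Theorem~\ref{par.4} with $E = \bbN_0\times\{0\}$.

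Before invoking the theorem I would verify that $\bbN_0\times\{0\}$ is a positive index set in the sense of Definition~\ref{tm.11a}: it contains $\bbN_0\times\{0\}$, every pair $(z,0)$ in it has $\Im z = 0$ and $\Re z\ge 0$, and any $(0,k)$ in it forces $k=0$. Hence $E$, $F$, $G$ are all positive and Theorem~\ref{par.4} applies, yielding a unique solution $u\in\cA^H_{\phg}([0,T]\times X;V)$ with $H = E_\infty + (F\cup G)$. It then remains only to compute $H$. Since $(\bbN_0\times\{0\}) + (\bbN_0\times\{0\}) = \bbN_0\times\{0\}$, the associated set $E_\infty = \bigcup_{n\ge 1}\sum_{j=1}^n E$ collapses to $\bbN_0\times\{0\}$; similarly $F\cup G = \bbN_0\times\{0\}$, so $H = (\bbN_0\times\{0\}) + (\bbN_0\times\{0\}) = \bbN_0\times\{0\}$.

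Finally, reading the conclusion back through the identification of the first paragraph, $\cA^H_{\phg}([0,T]\times X;V) = \CI([0,T]\times X;V)$, which is precisely the asserted smoothness together with uniqueness. There is no genuine obstacle here: the statement is pure bookkeeping once Theorem~\ref{par.4} is available, the only point deserving a moment's care being the check that the sum and union operations applied to $\bbN_0\times\{0\}$ reproduce $\bbN_0\times\{0\}$ rather than enlarging it — an enlargement would spuriously introduce logarithmic or fractional-power terms and thereby destroy the smoothness that the corollary claims.
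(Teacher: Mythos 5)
Your proposal is correct and follows exactly the route the paper takes: the paper's entire proof consists of the remark preceding the corollary, namely that one obtains it by setting $E=F=G=\bbN_0\times\{0\}$ in Theorem~\ref{par.4}. Your verification that $\bbN_0\times\{0\}$ is a positive index set, that $\cA^{\bbN_0\times\{0\}}_{\phg}=\CI$, and that the operations $E_\infty$, $+$, $\cup$ do not enlarge this index set is precisely the bookkeeping the paper leaves implicit.
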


\section{Polyhomogénéité des métriques $\ahc$ le long du flot de Ricci-DeTurck} \label{ahc.0}

Soit $X$ une variété compacte à bord de dimension $2n$.  Soit $\rho$ une fonction de définition du bord et $c: \pa X \times [0,\epsilon)\to X$ un voisinage tubulaire du bord tel que 
$$
                 \rho\circ c(x,t)=t.
$$ 
Supposons que le bord $\pa X$ soit muni d'une structure $\CR$ $(\pa X,H,J)$ définie le long d'une distribution de contact $H$ ayant $\eta \in \Omega^1(\pa X)$ comme forme de contact.  Supposons de plus que la structure $\CR$ soit strictement pseudoconvexe, c'est-à-dire que $d\eta(\cdot, J\cdot)$ définit une métrique euclidienne sur la distribution $H$.  Dans ce contexte, on a une $\Theta$-structure associée donnée par
\begin{equation}
  \Theta= \left.\overline{\Theta}\right|_{\pa X}, \quad \overline{\Theta}= c_* \pr_1^* \eta,  
\label{ahc.1}\end{equation}
où $\pr_1: \pa X\times [0,\epsilon)\to \pa X$ est la projection sur le premier facteur.

\begin{definition}
Une \textbf{métrique asymptotiquement hyperbolique complexe} (abrégé métrique $\ahc$) g sur $X\setminus \pa X$ est une $\Theta$-métrique pour la $\Theta$-structure \eqref{ahc.1} telle qu'il existe une 1-forme $\widetilde{\eta}\in \Omega^1(\pa X)$ dans la classe conforme de $\eta$ et $\delta>0$ tels que près du bord
$$
            g- c_*\left( \frac{4 d\rho^2}{\rho^2} + \frac{(\pr_1^*\widetilde{\eta})^2}{\rho^4} + \frac{\pr_1^*d\widetilde{\eta}(\cdot, J\cdot)}{\rho^2} \right) \in \rho^{\delta}\CI_{\Theta}(X\setminus\pa X; T^0_2(X\setminus\pa X)).
$$ 
On dira qu'elle est polyhomogène ou lisse jusqu'au bord si elle est polyhomogène ou lisse jusqu'au bord en tant que $\Theta$-métrique.  
\label{ahc.2}\end{definition}

La métrique hyperbolique complexe est évidemment  un exemple de métrique $\ahc$.   Comme le nom le suggère, le comportement à l'infini d'une métrique $\ahc$ est en fait modelé sur celui de la métrique hyperbolique complexe \cite[I.1.B]{Biquard2000}.  En particulier, on déduit de \cite[I.1.B, équation (1.7)]{Biquard2000} qu'une métrique $\ahc$ est asymptotiquement Einstein au sens suivant.
\begin{lemme}
Soit $g$ une métrique $\ahc$ sur $X\setminus \pa X$.  Alors il existe $\delta>0$ tel que
$$
              \Ric(g) + \frac{n+1}{2} g \in \rho^{\delta}\CI_{\Theta}(X\setminus\pa X; T^0_2(X\setminus \pa X)).
$$
\label{ahc.3}\end{lemme}
Lorsqu'une telle métrique est Einstein (\eg la métrique hyperbolique complexe), sa constante d'Einstein est donc nécessairement $\lambda=-\frac{n+1}{2}$.  Cela suggère de considérer le flot de Ricci normalisé 
\begin{equation}
    \frac{\pa g}{\pa t} = -\Ric(g) -\frac{n+1}{2}g, \quad g(0)=g_0,
\label{ahc.4}\end{equation}
avec $g_0$ une métrique $\ahc$.
Comme la métrique $g_0$ est complète et sa courbure sectionnelle est bornée, on sait par les travaux de Shi \cite{Shi1989} que le flot possède une solution lisse
existant au moins jusqu'à un certain temps $T>0$ et qu'il existe une contante $C>0$ telle que
$$
                           \frac{g_0}{C} \le g(t) \le Cg_0  \quad \forall \; t\in [0,T].
$$
Grâce aux travaux de Chen et Zhu \cite{Chen-Zhu2006}, on sait aussi que cette solution est unique.  Par les estimations de Shi \cite[\S1, (4)]{Shi1989} pour les dérivées du tenseur de courbure, on sait enfin que $g(t)\in \CI_{\Theta}([\delta,T)\times X\setminus \pa X; T^0_2(X\setminus \pa X))$ pour $\delta>0$.  Ces estimations se détériorent toutefois lorsque $\delta$ approche de $0$ du fait que les constantes apparaissant dans ces estimations ne tiennent compte que de la norme $\cC^0$ du tenseur de courbure.  L'avantage de cette approche est que l'on voit directement que la solution au flot existera tant que la norme $\cC^0$ du tenseur de  courbure reste bornée, ce qui donne une caractérisation simple du temps maximal d'existence du flot.  Pour prendre la limite $\delta\to 0$ et voir que la solution est bien dans  $\CI_{\Theta}([0,T)\times X\setminus \pa X; T^0_2(X\setminus \pa X))$, il faut obtenir des estimations qui prennent en compte les dérivées du tenseur de courbure de la métrique initiale.  La manière la plus simple est sans doute de se ramener à une équation parabolique quasi-linéaire et d'appliquer un argument de point fixe.

  En effet, le flot de Ricci n'est pas une équation parabolique, mais par un truc remontant à DeTurck \cite{Deturk}, on peut se ramener à une équation parabolique quasi-linéaire en modifiant la solution par une famille de difféomorphismes.  Plus précisément, le flot de Ricci-DeTurck normalisé est donné en coordonnées locales par
\begin{equation}
  \frac{\pa g_{ij}}{\pa t}=  -\Ric(g)_{ij}- \frac{n+1}{2} g_{ij} + \nabla_i W_j + \nabla_j W_i, \quad g(0)=g_0,
\label{ahc.5}\end{equation}
avec $t\to W(t)$ la famille de champs de vecteurs donnée par
\begin{equation}
              W^k= \frac12 g^{pq}\left( \Gamma^k_{pq}(t)-\Gamma^k_{pq}(0) \right),
\label{ahc.4a}\end{equation}
où $\Gamma^k_{pq}(t)$ est le symbole de Christoffel de la métrique $g(t)$.  Soit $t\to \varphi_t$ avec $\varphi_0=\Id$ la famille de difféomorphismes engendrée par la famille de champs de vecteurs $W$.  Alors si $g(t)$ est une solution de \eqref{ahc.5}, un calcul simple, voir par exemple \cite[p.81]{Chow-Knopf}, montre que $(\varphi_t^{-1})^*g(t)$ est une solution du flot de Ricci normalisé \eqref{ahc.4}.   Si on utilise des «$\sim$» pour dénoter les connexions de Levi-Civita et les tenseurs de courbure associés à la métrique initiale $g_0$, alors un calcul standard, voir par exemple \cite[Lemma~2.1]{Shi1989}, montre que le flot de Ricci-DeTurck peut être mis explicitement sous la forme d'une équation parabolique quasi-linéaire         
\begin{equation}
\begin{aligned}
\frac{\pa g_{ij}}{\pa t} &= \frac{g^{ab}}{2}\widetilde{\nabla}_a\widetilde{\nabla}_b g_{ij} -\frac{n+1}{2} g_{ij} -\frac{g^{ab}\widetilde{g}^{pq}}{2}\left(g_{ip}\widetilde{R}_{jaqb} +g_{jp}\widetilde{R}_{iaqb}\right)  \\
  & \hspace{0.5cm}+ \frac{g^{ab}g^{pq}}{4}\left(   \widetilde{\nabla}_ig_{pa}\widetilde{\nabla}_j g_{qb} + 2\widetilde{\nabla}_a g_{jp} \widetilde{\nabla}_{q}g_{ib} 
   -2\widetilde{\nabla}_a g_{jp}\widetilde{\nabla}_b g_{iq} - 2\widetilde{\nabla}_j g_{pa} \widetilde{\nabla}_b g_{iq} -2\widetilde{\nabla}_i g_{pa} \widetilde{\nabla}_b g_{jq}\right), \\
  g(0)&= g_0.
\end{aligned}
\label{ahc.6}\end{equation}
À l'aide des estimations de Schauder \eqref{par.3a} du Corollaire~\ref{par.3}, on peut alors appliquer un argument de point fixe et un argument de régularité parabolique (voir par exemple \cite{Bahuaud2011} pour le cas des métriques asymptotiquement hyperboliques) pour conclure que \eqref{ahc.6} possède une solution unique $g\in \CI_{\Theta}([0,T)\times X\setminus \pa X; T^0_2(X\setminus \pa X))$ pour au moins un court laps de temps $T>0$.  En combinant avec les résultats de Shi \cite{Shi1989} déjà mentionnés, on sait en fait que cette solution existera aussi longtemps que la norme $\cC^0$ du tenseur de courbure reste sous contrôle.

Comme notre choix de normalisation le suggère, le flot de Ricci-DeTurck normalisé préserve le comportement asymptotiquement hyperbolique complexe d'une métrique.

\begin{lemme}
Soit $g_0$ une métrique $\ahc$ sur $X$ et soit $t\to g(t)$, $t\in [0,T)$ la solution du flot de Ricci-DeTurck normalisé ayant $g_0$ comme métrique initiale.  Alors il existe $\delta>0$ tel que
$$
           g(t)-g_0\in \rho^{\delta}\CI_{\Theta}(X\setminus \pa X;T^0_2(X\setminus \pa X))
$$
pour tout $t\in [0,T)$.  En particulier, $g(t)$ est aussi une métrique $\ahc$ pour les mêmes choix de structure $\CR$, de fonction de définition du bord $\rho$ et de voisinage tubulaire $c: \pa X\times [0,\epsilon)\to X$. 
\label{ahc.6a}\end{lemme}
\begin{proof}
Posons $u=g-g_0$.  L'équation d'évolution de $u$  peut être mise sous la forme
\begin{equation}
  \frac{\pa u_{ij}}{\pa t} = \frac{g^{ab}}{2}\widetilde{\nabla}_a \widetilde{\nabla}_b u_{ij} + (Q_1(g_0,u, \widetilde{\nabla}u)\cdot \widetilde{\nabla}u)_{ij} + (Q_2(g_0,u,\widetilde{\nabla}u)\cdot u)_{ij} + f_{ij}, \quad u(0)=0, 
\label{ahc.9}\end{equation}  
où $Q_1$ est à valeur dans $\CI_{\Theta}([0,T)\times X\setminus \pa X; T^3_2 (X\setminus \pa X))$, $Q_2$ est à valeur dans  $\CI_{\Theta}([0,T)\times X\setminus \pa X; T^2_2 (X\setminus \pa X))$, la notation $\cdot$ indique une contraction naturel d'indices et $f\in \CI_{\Theta}([0,T)\times X\setminus \pa X; T^0_2 (X\setminus \pa X))$ est donnée par
\begin{equation}
\begin{aligned}
f_{ij}&= \frac{\widetilde{g}^{ab}}{2}\widetilde{\nabla}_a \widetilde{\nabla}_b \widetilde{g}_{ij} -\frac{n+1}{2} \widetilde{g}_{ij} -\frac{\widetilde{g}^{ab}\widetilde{g}^{pq}}{2}\left(\widetilde{g}_{ip}\widetilde{R}_{jaqb} +\widetilde{g}_{jp}\widetilde{R}_{iaqb}\right)  \\
  & \hspace{0.5cm}+ \frac{\widetilde{g}^{ab}\widetilde{g}^{pq}}{4}\left(   \widetilde{\nabla}_i \widetilde{g}_{pa}\widetilde{\nabla}_j \widetilde{g}_{qb} + 2\widetilde{\nabla}_a \widetilde{g}_{jp} \widetilde{\nabla}_{q}\widetilde{g}_{ib} 
   -2\widetilde{\nabla}_a \widetilde{g}_{jp}\widetilde{\nabla}_b \widetilde{g}_{iq} - 2\widetilde{\nabla}_j \widetilde{g}_{pa} \widetilde{\nabla}_b \widetilde{g}_{iq} -2\widetilde{\nabla}_i \widetilde{g}_{pa} \widetilde{\nabla}_b \widetilde{g}_{jq}\right), \\
   &= \frac{\pa g_{ij}}{\pa t}(0)= -\Ric(\widetilde{g})_{ij}-\frac{n+1}2 \widetilde{g}_{ij}.
\end{aligned}  
\label{ahc.10}\end{equation}
Comme $\widetilde{g}=g_0$ est une métrique $\ahc$, on voit donc par le Lemme~\ref{ahc.3} qu'il existe $\delta>0$ tel que 
\begin{equation}
 f\in \rho^{\delta}\CI_{\Theta}([0,T)\times X\setminus \pa X; T^0_2 (X\setminus \pa X)).
 \label{ahc.10b}\end{equation}
On peut alors voir \eqref{ahc.9} comme une équation parabolique linéaire,
\begin{equation}
  \frac{\pa u}{\pa t}= L_t  u + f, \quad u(0)=0,
\label{ahc.11}\end{equation}
où $t\to L_t \in\Diff^2_{\Theta}(X\setminus \pa X ; \End(T^0_2(X\setminus \pa X)))$ est la famille d'opérateurs uniformément $\Theta$-elliptique donnée par
\begin{equation}
   (L_t v)_{ij}= \frac{g^{ab}}{2} \widetilde{\nabla}_a \widetilde{\nabla}_b v_{ij} + (Q_1(g_0,u, \widetilde{\nabla}u)\cdot  \widetilde{\nabla}v)_{ij} + (Q_2(g_0,u, \widetilde{\nabla}u)\cdot v)_{ij}.
\label{ahc.11b}\end{equation}
Par \eqref{ahc.10b} et en appliquant le Corollaire~\ref{par.3}, on voit donc que 
\begin{equation}
 u\in \rho^{\delta}\CI_{\Theta}(X\setminus \pa X; T^0_2(X\setminus \pa X)).
\label{ahc.12a}\end{equation}
\end{proof}

Lorsque la métrique $g_0$ est polyhomogène, il est naturel de se demander s'il en sera de même pour la solution du flot de Ricci-DeTurck.  Le théorème qui suit répond par l'affirmative.
\begin{theoreme}
Soit $g_0$ une métrique asymptotiquement hyperbolique complexe sur l'intérieur d'une variété compacte à bord $X$.  Si $g_0$ est polyhomogène avec ensemble indiciel positif $E$ et si $g(t)$ pour $t\in [0,T)$ est la solution du flot de Ricci-DeTurck  \eqref{ahc.5} avec métrique initiale $g_0$, alors 
$$
           g\in \cA^{E_{\infty}}_{\phg}([0,T)\times X; {}^{\Theta}T^* X \otimes {}^{\Theta}T^* X).            
$$
\label{ahc.7}\end{theoreme}

Lorsque la métrique $g_0$ est lisse jusqu'au bord, c'est-à-dire que $E= \bbN_0\times \{ 0\}$, on peut facilement déduire de ce théorème le résultat suivant.
\begin{corollaire}
Soit $g_0$ une $\Theta$-métrique lisse jusqu'au bord sur l'intérieur d'une variété compacte à bord $X$.  Alors la solution $g(t)$ pour $t\in[0,T)$ au flot de Ricci \eqref{ahc.4} avec métrique initiale $g_0$ est telle que
$$
       g\in \CI([0,T)\times X; {}^{\Theta}T^* X \otimes {}^{\Theta}T^* X).
$$ 
\label{ahc.8}\end{corollaire}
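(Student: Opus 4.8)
The plan is to deduce this corollary from Th\'eor\`eme~\ref{ahc.7} together with the DeTurck correspondence relating the Ricci-DeTurck flow \eqref{ahc.5} to the normalized Ricci flow \eqref{ahc.4}. First I would apply Th\'eor\`eme~\ref{ahc.7} with the index set $E=\bbN_0\times\{0\}$, which is exactly the index set of a metric smooth up to the boundary. Since $E+E=\bbN_0\times\{0\}=E$, one has $E_\infty=E=\bbN_0\times\{0\}$, so that $\cA^{E_\infty}_{\phg}(X)=\CI(X)$. Hence the solution $\hg(t)$ of the Ricci-DeTurck flow \eqref{ahc.5} with initial metric $g_0$ is smooth up to the boundary, namely $\hg\in\CI([0,T)\times X;{}^{\Theta}T^*X\otimes{}^{\Theta}T^*X)$.

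Next, recall the correspondence described before \eqref{ahc.6}: if $t\mapsto\varphi_t$ (with $\varphi_0=\Id$) denotes the family of diffeomorphisms generated by the DeTurck vector field $W$ of \eqref{ahc.4a}, then $g(t)=(\varphi_t^{-1})^*\hg(t)$ is the sought solution of the normalized Ricci flow \eqref{ahc.4}. The crux is then to show that $W(t)$ is a smooth family of smooth $\Theta$-vector fields. Writing $W=\frac12\operatorname{tr}_{\hg}(\Gamma(t)-\Gamma(0))$, the difference $\Gamma(t)-\Gamma(0)$ of the Levi-Civita connections of the two $\Theta$-metrics $\hg(t)$ and $g_0$ is a tensor; since both are $\Theta$-connections attached to $\Theta$-metrics smooth up to the boundary, this difference is a smooth section of ${}^{\Theta}TX\otimes{}^{\Theta}T^*X\otimes{}^{\Theta}T^*X$, and contracting it against the smooth inverse $\Theta$-metric $\hg^{-1}$ yields $(\iota_{\Theta})_*W\in\cV_{\Theta}(X)$ for each $t$, depending smoothly on $t$.

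It then remains to verify that the flow of such a vector field preserves smoothness up to the boundary. By \eqref{tm.4}, every element of $\cV_{\Theta}(X)$ vanishes at $\pa X$, so the flow $\varphi_t$ fixes the boundary pointwise and, $W$ being a smooth $\Theta$-vector field, is smooth up to the boundary. Moreover, since $\cV_{\Theta}(X)$ is a Lie algebra closed under the bracket, the Lie derivative $\mathcal{L}_W=[W,\cdot]$ preserves $\cV_{\Theta}(X)$; consequently $(\varphi_t)_*$ preserves the Lie algebroid ${}^{\Theta}TX$, which is to say that $\varphi_t$ is a $\Theta$-diffeomorphism and that the pullback $(\varphi_t^{-1})^*$ maps $\CI(X;{}^{\Theta}T^*X\otimes{}^{\Theta}T^*X)$ into itself. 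Combined with the first step, this gives $g=(\varphi_t^{-1})^*\hg\in\CI([0,T)\times X;{}^{\Theta}T^*X\otimes{}^{\Theta}T^*X)$, as claimed.

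I expect the main obstacle to be precisely the structural claim that $W$ is a bona fide smooth $\Theta$-vector field and that its flow preserves the $\Theta$-structure up to the boundary; the delicate point is controlling the flow near $\pa X$, where $(\iota_{\Theta})_*W$ degenerates like the vector fields in \eqref{tm.4}. Once this is settled, the smoothness of the normalized flow follows formally from that of the Ricci-DeTurck flow, with no further analysis of the equation itself required.
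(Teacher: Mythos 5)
Your proposal is correct and takes essentially the same approach as the paper: apply Th\'eor\`eme~\ref{ahc.7} with $E=\bbN_0\times\{0\}$ (so that $E_\infty=E$ and $\cA^{E_\infty}_{\phg}(X)=\CI(X)$) to get smoothness up to the boundary of the Ricci--DeTurck solution, then transfer to the normalized flow by pulling back through the DeTurck diffeomorphisms $\varphi_t$. You are in fact more careful than the paper at the one delicate step: the paper's proof only records that $W\in\CI([0,T)\times X;TX)$ with $\left. W\right|_{\pa X}=0$, so that $\varphi_t$ fixes $\pa X$, whereas your identification of $W$ as a genuine smooth section of ${}^{\Theta}TX$ whose flow preserves $\cV_{\Theta}(X)$ is exactly what is needed, since pullback by a general boundary-fixing diffeomorphism need not preserve $\CI(X;{}^{\Theta}T^*X\otimes{}^{\Theta}T^*X)$ (without the condition $\overline{\Theta}(W)\in\rho^2\CI(X)$, the term $\overline{\Theta}^2/\rho^4$ can acquire $\rho^{-1}$ singularities under pullback).
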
 
\begin{proof}
Dans ce cas, on sait que la solution $\hat{g}(t)$ au flot de Ricci-Deturck est lisse jusqu'au bord.  Cela veut dire que le champ de vecteur $W$ de \eqref{ahc.4a} est lisse jusqu'au bord, en fait que $W\in \CI([0,T)\times X; TX)$ et que $\left.W\right|_{\pa X}=0$.  C'est donc dire que la famille de difféomorphismes $\varphi_t$ engendrée par $W$ est constituée de difféomorphismes de $X$ qui figent le bord $\pa X$.  La solution $g(t)= (\varphi_t^{-1})^*\hat{g}(t)$ au flot de Ricci sera donc aussi lisse jusqu'au bord.     
\end{proof}

\begin{proof}[Démonstration du Théorème~\ref{ahc.7}]

Comme dans la preuve du Lemme~\ref{ahc.6a}, posons $u=g-g_0$.  Par le Lemme~\ref{ahc.6a} et \eqref{ahc.12a}, on sait donc que   
\begin{equation}
 u\in \rho^{\delta}\CI_{\Theta}(X\setminus \pa X; T^0_2(X\setminus \pa X)), \quad \forall \; \delta<p,
\label{ahc.12}\end{equation}
où $p$ est le plus petit réel positif tel que $(p,0)\in E_{\infty}$.  C'est le premier pas d'une preuve par induction du Théorème.  
Plus précisément, soit $\{ z_j \}_{j\in\bbN_0}$ une suite strictement croissante de nombres réels telle que 
\begin{gather}
        z_0=0 \; \mbox{et} \;  (z_j,0)\in E_{\infty} \;\forall j\in \bbN, \\
         (z,k)\in E_{\infty}\; \Longrightarrow \; z=z_j \; \mbox{pour un certain} \; j\in\bbN_0.  
\end{gather}
Par définition de la suite $\{z_j\}$ et de $E_{\infty}$, remarquons que 
\begin{equation}
            z_j\ge z_1 \ge z_{j+1}-z_j  \quad \forall \; j\in \bbN.
\label{ahc.13}\end{equation}
Pour établir que $u$, et donc $g$, est polyhomogène avec ensemble indiciel $E_{\infty}$, il faut que pour tout $j\in\bbN_0$, on puisse trouver $u_j\in \cA^{E_{\infty}}_{\phg}([0,T)\times X; {}^{\Theta}T^* X\otimes {}^{\Theta}T^* X)$ tel que 
\begin{equation}
  u-u_j \in \rho^{\nu}\CI_{\Theta}([0,T)\times X\setminus \pa X; T^0_2(X\setminus \pa X)), \quad \forall \; \nu<z_{j+1}.
\label{ahc.14}\end{equation}
Par \eqref{ahc.12}, pour $j=0$, on peut prendre $u_0\equiv 0$.  Procédons par récurrence et supposons maintenant que pour un certain $j\in \bbN$, on ait déjà trouvé $u_{j-1}\in\cA^{E_{\infty}}_{\phg}([0,T)\times X; {}^{\Theta}T^* X\otimes {}^{\Theta}T^* X)$ tel que  
\begin{equation}
  u-u_{j-1} \in \rho^{\nu}\CI_{\Theta}([0,T)\times X\setminus \pa X; T^0_2(X\setminus \pa X)) \quad \forall \; \nu<z_{j}.
\label{ahc.15}\end{equation}
Il faut alors trouver $u_j$ pour que \eqref{ahc.14} soit aussi valide.  Pour ce faire, posons $v= u-u_{j-1}$.  Comme l'équation d'évolution de $u$ est donnée par \eqref{ahc.11} avec $L_t$ défini par \eqref{ahc.11b}, on a que
\begin{equation}
  \frac{\pa v}{\pa t}= L_tv +f^j, \quad f^j:= f+L_t u_{j-1} - \frac{\pa u_{j-1}}{\pa t}.
\label{ahc.16}\end{equation}
Grâce à \eqref{ahc.15} et la définition de $f^j$, on voit que $f^j\in \rho^{\nu}\CI_{\Theta}([0,T)\times X\setminus \pa X; T^0_2(X\setminus \pa X))$ pour tout $\nu<z_j$.  Cependant, comme la famille d'opérateurs $L_t$ dépend de $v$ et n'est donc pas a priori polyhomogène, on ne peut pas conclure que $f^j$ est polyhomogène.  À nouveau, grâce à \eqref{ahc.15}, on sait toutefois que 
\begin{equation}
L_t\in \Diff^2_{\Theta, E_{\infty}}([0,T)\times X; {}^{\Theta}T^*X\otimes{}^{\Theta}T^*X)+ \rho^{\nu}\Diff^2_{\Theta}(X\setminus\pa X; T^0_2(X\setminus\pa X))
\label{ahc.17}\end{equation}  
pour tout $\nu<z_j$.  Comme $u_{j-1}\in \rho^{\nu}\CI(X\setminus \pa X; T^0_2(X\setminus \pa X))$ pour tout $\nu<z_1$ et que par \eqref{ahc.13}  $z_1\ge z_{j+1}-z_j$, on peut donc conclure que
\begin{equation}
  f^j\in \cA^{E_{\infty}}_{\phg}([0,T)\times X; {}^{\Theta}T^* X\otimes {}^{\Theta}T^* X)+  \rho^{\nu}\CI_{\Theta}([0,T)\times X\setminus \pa X; T^0_2(X\setminus \pa X)) 
  \label{ahc.18}\end{equation}
pour tout $\nu<z_{j+1}$.  Si $v$ est polyhomogène avec ensemble indiciel $E_{\infty}$, alors pour $k\in\bbN_0$ le plus grand entier tel que $(z_j,k)\in E_{\infty}$, on peut restreindre l'équation \eqref{ahc.16} au coefficient d'ordre $\rho^{z_j}(\log\rho)^k$ sur le bord pour obtenir une équation différentielle ordinaire linéaire
\begin{equation}
  \frac{\pa v_{\pa}}{\pa t}= A\cdot v_\pa+ f^j_{\pa}, \quad v_{\pa}(0)=0.
\label{ahc.19}\end{equation} 
Insistons sur le fait ici que le coefficient $A$ ne dépend pas de $v_{\pa}$, et donc que \eqref{ahc.19} est bien une équation linéaire et non une équation non-linéaire déguisée en équation linéaire comme \eqref{ahc.16}.  La solution $v_{\pa}$ de cette équation donne par construction la restriction de $v$ au bord à l'ordre $\rho^{z_j}(\log \rho)^k$.  Maintenant, même si on ne sait pas a priori si $v$ est polyhomogène, remarquons que l'équation \eqref{ahc.19} a malgré tout un sens et que sa solution $v_{\pa}$ donne le candidat naturel de ce que serait la restriction à l'ordre $\rho^{z_j}(\log \rho)^k$ de $v$.  Pour montrer que c'est bien le cas, posons
$$
           w=v- w_{\pa}, \quad w_{\pa}:=\rho^{z_j}(\log\rho)^k \Xi(v_{\pa}),
$$ 
où 
$$
\Xi: \CI(\pa X;{}^{\Theta}T^* X\otimes {}^{\Theta}T^* X)\to \CI(X; {}^{\Theta}T^* X\otimes {}^{\Theta}T^* X)
$$ 
est un choix d'application d'extension,  c'est-à-dire que $\left.\Xi(h)\right|_{\pa X}=h$ pour tout $h\in  \CI(\pa X;{}^{\Theta}T^* X\otimes {}^{\Theta}T^* X)$.  On a alors que
$$
  \frac{\pa w}{\pa t}= L_t w + h^j, \quad w(0,\cdot)=0, \quad h^j:= f^j+ L_t w_{\pa} -\frac{\pa w_{\pa}}{\pa t}.
$$ 
Grâce à \eqref{ahc.18} et notre choix de $w_{\pa}$, on aura que
$$
              h^j\in \cA^{E_{\infty}}_{\phg}(X;   {}^{\Theta}T^* X\otimes {}^{\Theta}T^* X)+ \rho^{\nu}\CI_{\Theta}(X\setminus \pa X; T^0_2(X\setminus \pa X)),\quad \forall \;\nu<z_{j+1}
$$
et que
$$
   h^j\in \rho^{z_j}(\log \rho)^{k-1}\CI_{\Theta}(X\setminus \pa X; T^0_2(X\setminus \pa X)).
$$
 En répétant cet argument $k$ fois, on peut donc trouver $b_k, b_{k-1},\ldots, b_0\in \CI([0,T)\times \pa X; {}{\Theta}T^* X\otimes {}^{\Theta}T^* X)$ tel que
 $$
     \hat{w}=v - \sum_{\ell=0}^k \Xi(b_{\ell}) \rho^{z_j}(\log \rho)^{\ell}
 $$ 
a pour équation d'évolution
$$
   \frac{\pa\hat{w}}{\pa t}= L_t \hat{w} + \hat{f}^j, \quad \hat{w}(0,\cdot)=0, 
$$
avec 
$$
  \hat{f}^j\in  \rho^{\nu}\CI_{\Theta}(X\setminus \pa X; T^0_2(X\setminus \pa X)),\quad \forall \;\nu<z_{j+1}.
$$
En appliquant le Corollaire~\ref{par.3}, on conclut donc que $\hat{w}\in \rho^{\nu}\CI_{\Theta}(X\setminus \pa X; T^0_2(X\setminus \pa X))$ pour tout $\nu<z_{j+1}$.  Le pas d'induction est donc complété en prenant
$$
     u_j= u_{j-1} + \sum_{\ell=0}^k \Xi(b_{\ell}) \rho^{z_j}(\log \rho)^{\ell}.
$$ 

\end{proof}

\section{Polyhomogénéité des métriques kählériennes $\ahc$ le long du flot de Ricci} \label{kahc.0}

Soit $\cU$ une variété compacte complexe à bord, c'est-à-dire que $\cU$ est une variété compacte à bord plongée dans une variété complexe.  Un exemple à garder à l'esprit est le cas plus particulier où $\cU$ est un domaine dans  $\bbC^n$.    Soit $r\in \CI(\cU)$ un choix de fonction de définition du bord.  Dénotons par $\eta\in \CI(\pa\cU; T^*\pa \cU)$ la restriction au bord de la 1-forme $i\pa r-i\db r$.  Le noyau de $\eta$ définit alors une distribution $H$ de codimension 1 sur $\pa \cU$.  La structure complexe $J$ de $\cU$ se restreint naturellement à une structure complexe sur $H$, ce qui confère à $\pa \cU$ une structure de variété $\CR$.  On supposera que le bord de $\cU$ est strictement pseudoconvexe, ce qui signifie que la forme $-i\pa\db r(\cdot, J\cdot)= d\eta(\cdot, J\cdot)$ est définie positivement sur $H$.  En particulier, $(X,H)$ est une variété de contact ayant $\eta$ comme forme de contact.     

Dans ce contexte, on a une $\Theta$-structure associée.  Comme expliqué dans \cite{EMM}, celle-ci n'est toutefois pas définie directement sur $\cU$.  Il faut au préalable introduire la racine carrée $\cU_{\frac12}$ de la variété à bord $\cU$.  La racine carrée $\cU_{\frac12}$ est la variété obtenue à partir de la variété $\cU$ en ajoutant $r^{\frac12}$ à l'anneau $\CI(\cU)$ des fonctions lisses sur $\cU$, de sorte que $\CI(\cU_{\frac12})$ est engendré par $r^{\frac12}$ et $\CI(\cU)$.  Un choix de fonction de définition du bord sur $\cU_{\frac12}$ est donc donné par $\rho= r^{\frac12}$.  Pour alléger la notation, on dénotera la racine carrée $\cU_{\frac12}$ de $\cU$ par $X$.  Remarquons que l'inclusion de $\CI(\cU)$ dans $\CI(X)$ donne lieu à une application canonique
$$
          \iota_{\frac12} : X\to \cU.
$$
Cette application n'est pas un difféomorphisme, mais sa restriction à $\pa X$ ou à $X\setminus \pa X$ donnent des difféomorphismes canoniques entre $\pa X$ et $\pa \cU$ d'une part et entre $X\setminus \pa X$ et $\cU\setminus \pa \cU$ d'autre part.  La structure complexe sur $\cU$ permet de définir une $\Theta$-structure sur $X$ en posant
$$
                   \Theta=   \left. \iota_{\frac12}^*( -i\db r+ i\pa r)\right|_{\pa X} \in \CI(\pa X; \left.TX\right|_{\pa X}).
$$  
 La forme 
\begin{equation}
\overline{\Theta}= \iota_{\frac12}^*\left( -i\db r + i\pa r\right) \in \CI(X; TX)
\label{kahc.1a}\end{equation}
est une extension naturelle de $\Theta$ à $X$. Évidemment, la forme $\Theta$ dépend du choix de la fonction de définition du bord $r$.  Cependant, on vérifie aisément que la classe conforme de $\Theta$ est indépendante du choix de $r$.  Dans ce cadre particulier, une métrique $\ahc$ $g$ sur $X\setminus \pa X$ est une $\Theta$-métrique complète sur $X\setminus \pa X$ pour laquelle il existe une forme $\eta\in \CI(X;TX)$ avec $\left. \eta\right|_{\pa X}$ dans la classe conforme de $\Theta$ et $\delta>0$ telle que 
\begin{equation}
         g- \left( \frac{4d\rho^2}{\rho^2}+ \frac{\eta^2}{\rho^2}+ \frac{d\eta(\cdot,J\cdot)}{\rho}  \right) \in \rho^\delta\CI_{\Theta}(X\setminus \pa X; T^0_2(X\setminus\pa X)),
\label{kahc.3c}\end{equation}  
où $\rho=r^{\frac12}$.  On dira que la métrique $g$ est lisse jusqu'au bord ou polyhomogène si elle est lisse jusqu'au bord ou polyhomogène en tant que $\Theta$-métrique.  
\begin{remarque}
Cette notion de métrique $\ahc$ ne dépend pas du choix de la fonction de définition du bord $r$.  En effet, soit $\widetilde{r}= e^f r$ un autre choix, où $f\in \CI(U)$.  Alors la condition \eqref{kahc.3c} équivaut à demander que pour un certain $\delta>0$,  
$$
g- \left( \frac{4d\widetilde{\rho}^2}{\widetilde{\rho}^2}+ \frac{\widetilde{\eta}^2}{\widetilde{\rho}^4}+ \frac{d\widetilde{\eta}(\cdot,J\cdot)}{\widetilde{\rho}^2}  \right) \in \widetilde{\rho}^\delta\CI_{\Theta}(X\setminus \pa X; T^0_2(X\setminus\pa X))
$$
avec $\widetilde{\rho}= e^{\frac{f}{2}}\rho$ et $\widetilde{\eta}= e^{f}\eta$.
\label{kahc.3b}\end{remarque}
Autrement dit, une métrique asymptotiquement hyperbolique complexe est une $\Theta$-métrique dont le comportement près de $\pa X$ est modelé sur celui de la métrique 
$$
\frac{4d\rho^2}{\rho^2}+ \frac{\eta^2}{\rho^4}+ \frac{d\eta(\cdot,J\cdot)}{\rho^2} .
$$

Lorsque $\cU$ est une variété kählérienne à bord, Il est assez facile de construire des exemples de métriques kählériennes $\ahc$ sur $X\setminus \pa X$.  En effet, soit $\omega$ une forme kählérienne sur $\cU$.  Elle donne par restriction une forme kählérienne sur $X\setminus \pa X= \cU\setminus \pa \cU$.  La métrique kählérienne associée $g_{\omega}$ est bien entendue incomplète.  Pour la rendre complète, on peut lui ajouter la contribution venant d'un potentiel 
$$
      u= -4 \log r= -8\log \rho,
$$
où $\rho=r^{\frac12}$ est un choix de fonction de définition du bord de $X$.  En choisissant une constante $c>0$ suffisamment grande, on obtient alors une 2-forme non-dégénérée
\begin{equation}
 \omega_c = c\omega +\frac{i}{2} \pa \db u = c\omega +2i\left(  - \frac{\pa \db r}{r} +\frac{\pa r \wedge \db r}{r^2} \right).
\label{kahc.1}\end{equation}
En effet, l'hypothèse de stricte pseudoconvexité du bord de $\cU$ nous assure que la forme $i\pa \db u$ est non-dégénérée dans un voisinage de $\pa X$.  En prenant $c>0$ assez grand, la forme $\omega_c$ sera donc kählérienne partout sur $X\setminus \pa X$.  La métrique associée $g_{\omega_c}$ est alors clairement complète sur $X\setminus \pa X$.  C'est en fait une $\Theta$-métrique par rapport à la $\Theta$-structure associée à la structure complexe $\cU$.  Pour le voir, remarquons qu'un calcul simple montre que 
\begin{equation}  
  g_{\omega_c}= c g_{\omega} + \frac{\gamma}{\rho^2} + \frac{4d\rho^2}{\rho^2} + \frac{ \overline{\Theta}^2}{\rho^4}, \quad \mbox{avec} \quad \gamma= \iota_{\frac12}^*\left(  - 2i \pa \db r (\cdot, J\cdot) \right),
\label{kahc.2}\end{equation}
est clairement une $\Theta$-métrique lisse jusqu'au bord.  La métrique $g_{\omega_c}$ est un exemple typique d'une métrique kählérienne asymptotiquement hyperbolique complexe.  Comme le lemme qui suit l'indique, le fait que $\eta= \overline{\Theta}$ dans cet exemple n'est pas un hasard.

\begin{lemme}
Soit $(\cU,J)$ une variété kählérienne compacte à bord strictement pseudoconvexe.  Soit $r\in \CI(\cU)$ un choix de fonction de définition du bord et $\Theta= \left.\iota^*_{\frac12}(i\pa r-i\db r) \right|_{\pa X}$ la $\Theta$-structure associée.  Si $g$ est une métrique kählérienne asymptotiquement hyperbolique complexe, alors il existe $\delta>0$ tel que 
$$
g- \left( \frac{4d\rho^2}{\rho^2}+ \frac{\overline{\Theta}^2}{\rho^4}+ \frac{d\overline{\Theta}(\cdot,J\cdot)}{\rho^2}  \right) \in \rho^\delta\CI_{\Theta}(X\setminus \pa X; T^0_2(X\setminus\pa X)),
$$  
où $\rho= r^{\frac12}$ et $\overline{\Theta}$ est donné par \eqref{kahc.1a}. 
\label{kahc.4}\end{lemme}  
\begin{proof}
Par hypothèse, il existe une $1$-forme $\eta\in \CI(X;T^*X)$ avec $\left. \eta \right|_{\pa X} = e^{f}\Theta$ pour une certaine fonction $f\in \CI(\pa X)$ et $\delta>0$ tels que \eqref{kahc.3c} est valable.  Sans perte de généralité, on peut supposer que $f$ est la restriction d'une fonction lisse sur $\cU$ (et donc sur $X$) qu'on dénotera aussi $f$.  En termes de la forme $\overline{\Theta}$ et en prenant $\delta>0$ plus petit si nécessaire, en particulier plus petit que $1$, l'équation \eqref{kahc.3c} équivaut à
\begin{equation}
   g- \left( \frac{4d\rho^2}{\rho^2}+ \frac{e^{2f}\overline{\Theta}^2}{\rho^4}+ \frac{e^f d\overline{\Theta}(\cdot,J\cdot)}{\rho^2}  \right) \in \rho^\delta\CI_{\Theta}(X\setminus \pa X; T^0_2(X\setminus\pa X)).
\label{kahc.5}\end{equation}
Il faut montrer que $\left. f\right|_{\pa X}\equiv 1$, car on pourra alors choisir l'extension de $f$ à $X$ de sorte que $f\equiv 1$ partout sur $X$.  D'abord, la 2-forme associée à la métrique 
$$
     h=\frac{4d\rho^2}{\rho^2}+ \frac{e^{2f}\overline{\Theta}^2}{\rho^4}+ \frac{e^f d\overline{\Theta}(\cdot,J\cdot)}{\rho^2}= \frac{dr^2}{r^2}+ \frac{e^{2f}\overline{\Theta}^2}{r^2}+ \frac{e^f d\overline{\Theta}(\cdot,J\cdot)}{r}$$
est donnée par
\begin{equation}
\begin{aligned}
  \omega_h &= h(J\cdot, \cdot) = \frac{1+e^{2f}}{2r^2} \overline{\Theta} \wedge dr + -2ie^f\frac{\pa\db r}{r} \\
  &= i(1+e^{2f}) \frac{\pa r \wedge \db r}{r^2} -2ie^f \frac{\pa\db r}{r} \\
  &=  2ie^f\left( \frac{\pa r \wedge \db r}{r^2} - \frac{\pa \db r}{r}  \right)  + i(1-e^f)^2\frac{\pa r \wedge \db r}{r^2}.
\end{aligned}
\end{equation}
On a donc que 
\begin{equation}
\begin{aligned}
  d\omega_h &=  2ie^f \frac{df\wedge \pa r\wedge \db r}{r^2} -2ie^f \frac{df\wedge \pa \db r}{r} -2i(1-e^f)e^f \frac{df\wedge \pa r\wedge \db r}{r^2} +  \\
  &  \hspace{1cm} + i(1-e^f)^2 \left( \frac{\db\pa r\wedge \db r - \pa r \wedge \pa \db r}{r^2} \right)         \\
  &=  2ie^{2f} \frac{df\wedge \pa r \wedge \db r}{r^2} - 2ie^f \frac{df\wedge\pa\db r}{r} -i(1-e^f)^2 \frac{dr\wedge \pa\db r}{r^2}.
\end{aligned}  
\label{kahc.6}\end{equation}
Puisque la forme de Kähler de $g$ est fermée, on déduit de \eqref{kahc.5} que $d\omega_h\in \rho^{\delta}\CI_{\Theta}(X\setminus \pa X; \Lambda^3(T^*(X\setminus \pa X))$ pour un certain $0<\delta\le 1$.  Puisque 
$$
df\in\rho\CI_{\Theta}(X\setminus \pa X; T^*(X\setminus \pa X)),
$$ 
des trois termes dans la dernière ligne de \eqref{kahc.6}, seul le dernier n'est potentiellement pas contenu dans $\rho^{\delta}\CI_{\Theta}(X\setminus \pa X; \Lambda^3(T^*(X\setminus \pa X)))$, mais dans $\CI_{\Theta}(X\setminus \pa X; \Lambda^3(T^*(X\setminus \pa X)))$.  Il faut donc que $\left. f\right|_{\pa X}\equiv 1$ pour que la forme $d\omega_h$ soit asymptotiquement nulle.

\end{proof}

 Lorsque la métrique initiale $g_0$ du flot de Ricci normalisé \eqref{ahc.4} est une métrique kählérienne $\ahc$, on peut tirer avantage de la structure kählérienne pour étudier les questions de convergence et d'existence pour tout temps positif.    En fait, en prenant l'ansatz $\widetilde{\omega}_t= \omega_t + i\pa \db u$ avec
$$
     \omega_t= -\frac{2}{n+1}Ric(\omega_0)+ e^{-\frac{n+1}{2}t}(\omega_0+\frac{2}{n+1}\Ric(\omega_0)),
$$
on peut réécrire le flot de Ricci normalisée \eqref{ahc.4} en termes du potentiel $u$ de la façon suivante,
\begin{equation}
  \frac{\pa u}{\pa t}= \log\left( \frac{(\omega_t+ i\pa \db u)^n}{\omega_0^n} \right) -\frac{n+1}{2}u, \quad u(0,\cdot)=0.
\label{ricci.2}\end{equation}
En considérant cette équation pour des métriques kählériennes complètes de courbure sectionnelle bornée, Chau \cite{Chau2004} a obtenu un critère de nature cohomologique pour la convergence du flot vers une métrique de Kähler-Einstein.  Pour notre métrique kählérienne $g_0$ asymptotiquement hyperbolique complexe, le critère de Chau stipule que le flot de Ricci ayant $g_0$ pour métrique de départ convergera vers une métrique de Kähler-Einstein pourvu qu'il existe une fonction lisse bornée $f$ sur $X\setminus \pa X$ telle que
$$
                            Ric(\omega_0)+\frac{n+1}{2}\omega_0 = i\pa\db f.
$$ 
\begin{exemple}
Lorsque $\cU$ est un domaine strictement convexe dans $\bbC^n$ et $g_0$ est la métrique ayant pour forme kählérienne
$$
        \omega_0= -2i\pa\db \log r
$$ 
où $r$ est un choix de fonction de définition du bord qui soit en même temps strictement plurisuperharmonique, on peut appliquer le critère avec 
$$
f=-(n+1)\log r -\log \det((g_0)_{p\overline{q}})
$$
pour voir que le flot converge vers une métrique de Kähler-Einstein, plus précisément la métrique de Cheng-Yau \cite{Cheng-Yau}.
\label{ricci.3}\end{exemple}
Cet exemple admet la généralisation suivante.
\begin{exemple}
Soit $v\in\Omega^{2n}_{\Theta}=\CI_{\Theta}(X;\Lambda^{2n}({}^{\Theta}T^*X))$ un choix de $\Theta$-forme volume lisse jusqu'au bord.  Cette forme volume définit sur $X\setminus \pa X$ une métrique hermitienne pour le fibré canonique $K_{\cU}$.  En coordonnées locales complexes sur $\cU$, la courbure associée est donnée par
$$
                \Omega= \db\pa \log u,  \quad v= u dz_1\wedge d\overline{z}_1\wedge \ldots \wedge dz_n\wedge d\overline{z}_n$$ 
et on vérifie aisément que $\Omega\in \Omega^2_{\Theta}(X)$.  Supposons qu'on puisse choisir $v$ de sorte que $\omega_0= \frac{2i\Omega}{n+1}$ soit une forme kählérienne, voir \cite{Cheng-Yau, Mok-Yau, Coevering2012} pour des situations où un tel choix est possible.  On peut alors appliquer le critère de Chau pour le flot de Ricci débutant avec $\omega_0$ en prenant 
$$
         f= \log \left( \frac{v}{\omega_0^n} \right)
$$    
pour voir que le flot converge vers une métrique de Kähler-Einstein.            
\label{ricci.4}\end{exemple}

Dans cette section, on s'intéresse au comportement à l'infini de $\Theta$-métriques kählériennes $\ahc$ polyhomogènes lorsqu'elles sont modifiées par le flot de Ricci normalisé.

\begin{theoreme}
Soit $g_0$ une $\Theta$-métrique kählérienne $\ahc$  qui soit polyhomogène avec ensemble indiciel positif $E$.  Si $u\in \CI_{\Theta}([0,T]\times X\setminus \pa X)$ est une solution de \eqref{ricci.2} jusqu'au temps $T>0$, alors en fait $u\in\cA^{E_{\infty}}_{\phg}([0,T]\times X)$.  En particulier, la solution $\widetilde{g}_t$ sera polyhomogène avec ensemble indiciel $E_{\infty}$ pour tout $t\in[0,T]$.  
\label{ricci.5}\end{theoreme}
En prenant $E=\bbN_0\times \{0\}$, on obtient le corollaire suivant.
\begin{corollaire}
Soit $g_0$ une métrique kählérienne $\ahc$ qui soit lisse jusqu'au bord.  Si $u\in \CI_{\Theta}([0,T]\times X\setminus \pa X)$ est une solution de \eqref{ricci.2} jusqu'au temps $T>0$, alors en fait $u\in\CI([0,T]\times X)$.  En particulier, la solution $\widetilde{g}_t$ sera une $\Theta$-métrique lisse jusqu'au bord pour tout $t\in[0,T]$.  
\label{ricci.6}\end{corollaire}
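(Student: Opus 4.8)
Le plan est de d\'eduire ce r\'esultat du Th\'eor\`eme~\ref{ricci.5} par sp\'ecialisation \`a l'ensemble indiciel $E=\bbN_0\times\{0\}$. D'abord, je v\'erifierais que $E=\bbN_0\times\{0\}$ est bien un ensemble indiciel positif au sens de la D\'efinition~\ref{tm.11a} : l'inclusion $\bbN_0\times\{0\}\subset E$ est triviale, tout \'el\'ement $(z,k)\in E$ satisfait $\Im z=0$ et $\Re z\ge 0$ puisque $z\in\bbN_0$, et le seul \'el\'ement de la forme $(0,k)$ est $(0,0)$, ce qui assure la condition $(0,k)\in E\Rightarrow k=0$. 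L'hypoth\`ese du Corollaire, \`a savoir que $g_0$ est une m\'etrique k\"ahl\'erienne $\ahc$ lisse jusqu'au bord, revient alors exactement \`a dire que $g_0$ est polyhomog\`ene avec cet ensemble indiciel positif $E$, puisque $\cA^{\bbN_0\times\{0\}}_{\phg}(X)=\CI(X)$ comme rappel\'e apr\`es la D\'efinition~\ref{tm.8}.

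Le point central est le calcul de l'ensemble indiciel $E_{\infty}$ correspondant. Comme la somme de deux ensembles indiciels lisses demeure lisse, on a $E+E=\{(z_1+z_2,0)\;|\;z_1,z_2\in\bbN_0\}=\bbN_0\times\{0\}=E$, d'o\`u $\sum_{j=1}^n E=E$ pour tout $n\in\bbN$ et donc $E_{\infty}=E=\bbN_0\times\{0\}$. Ainsi la conclusion du Th\'eor\`eme~\ref{ricci.5}, \`a savoir $u\in\cA^{E_{\infty}}_{\phg}([0,T]\times X)$, se lit $u\in\cA^{\bbN_0\times\{0\}}_{\phg}([0,T]\times X)=\CI([0,T]\times X)$, ce qui est pr\'ecis\'ement la premi\`ere assertion. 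De m\^eme, l'assertion du Th\'eor\`eme~\ref{ricci.5} selon laquelle $\widetilde{g}_t$ est polyhomog\`ene avec ensemble indiciel $E_{\infty}=\bbN_0\times\{0\}$ signifie, par d\'efinition, que $\widetilde{g}_t$ est une $\Theta$-m\'etrique lisse jusqu'au bord, ce qui fournit la seconde assertion.

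Aucune \'etape de ce raisonnement n'est difficile : le corollaire est une pure sp\'ecialisation, et le seul v\'eritable calcul est l'identification $E_{\infty}=\bbN_0\times\{0\}$, qui repose sur la stabilit\'e de l'ensemble indiciel lisse sous la somme. Tout le contenu analytique, en particulier le fait d\'elicat que le passage de la r\'egularit\'e du potentiel $u$ \`a celle de $\widetilde{g}_t$ n'introduit ni puissance non enti\`ere ni logarithme, est d\'ej\`a encapsul\'e dans la d\'emonstration du Th\'eor\`eme~\ref{ricci.5} \`a travers l'ansatz $\widetilde{\omega}_t=\omega_t+i\pa\db u$. L'obstacle \'eventuel se situe donc enti\`erement en amont, dans l'\'etablissement du Th\'eor\`eme~\ref{ricci.5} lui-m\^eme, et non dans la d\'eduction pr\'esente.
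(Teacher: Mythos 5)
Your proposal is correct and is essentially the paper's own argument: the paper derives the Corollaire~\ref{ricci.6} precisely by taking $E=\bbN_0\times\{0\}$ dans le Th\'eor\`eme~\ref{ricci.5}, la seule v\'erification implicite \'etant celle que vous explicitez, \`a savoir que $E=\bbN_0\times\{0\}$ est un ensemble indiciel positif avec $E_{\infty}=E$ et que $\cA^{\bbN_0\times\{0\}}_{\phg}=\CI$. Votre r\'edaction ne fait que d\'etailler ces points, ce qui est fid\`ele \`a la d\'emarche du texte.
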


D'abord, supposons que $u$ soit bien polyhomogène.  Dans ce cas, on peut restreindre l'équation \eqref{ricci.2} au bord $\pa X$.  Par le Lemme~\ref{ahc.3} et le fait que $\left.\pa\db u\right|_{\pa X}=0$ en tant qu'élément de $\cA^{E_{\infty}}_{\phg}([0,T]\times X; \Lambda^2({}^{\Theta}T^* X))$, on voit que la restriction de \eqref{ricci.2} au bord $\pa X$ est 
\begin{equation}
\frac{\pa u_{\pa}}{\pa t}= -\frac{n+1}{2} u_{\pa}, \quad u_{\pa}(0)=0, \quad u_{\pa}:= \left. u \right|_{\pa X}.
\label{ricci.7}\end{equation}
La solution de cette équation est $u_{\pa}(t,\cdot)\equiv0$ pour tout $t$.  Maintenant, même si on ne sait pas a priori si la restriction de $u$ sur $\pa X$ a un sens, l'équation \eqref{ricci.7} elle en a un et sa solution $u_\pa\equiv 0$ donne le candidat naturel de ce que serait la restriction de $u$ si elle existait.  Pour vérifier que $u_{\pa}$ est effectivement la restriction de $u$ au bord, on utilisera essentiellement le fait que la métrique initiale est polyhomogène et le Corollaire~\ref{par.3}.  Initialement toutefois, on ne pourra obtenir un tel résultat que sur un petit intervalle de temps.  Comme on pourra trouver une borne inférieure uniforme à la longueur de cet intervalle en termes de la famille de métriques $g(t)$, $t\in [0,T]$, on pourra toutefois appliquer ce stratagème un nombre fini de fois pour couvrir l'intervalle $[0,T]$ en entier.  Pour ce faire, on considérera pour un choix de $t_0\in [0,T)$ une version translatée de l'équation \eqref{ricci.2}, à savoir
\begin{equation}
 \frac{\pa \hu}{\pa t}= \log\left( \frac{(\omega_{t+t_0}+ i\pa \db u(t+t_0,\cdot))^n}{\omega_0^n} \right) -\frac{n+1}{2}u(t+t_0,\cdot), \quad \hu(0,\cdot)=0,
 \label{ricci.8}\end{equation} 
où $\hu(t,\cdot)= u(t+t_0,\cdot)-u(t_0,\cdot)$.  En posant $\homega_t= \omega_{t+t_0}+ i\pa \db u(t_0,\cdot)$, de sorte que $\homega_t+ i\pa\db \hu=\widetilde{\omega}_{t+t_0}$, cette équation peut être réécrite de la façon suivante,
\begin{equation}
 \frac{\pa \hu}{\pa t}= \log\left( \frac{(\homega_t+ i\pa \db \hu)^n}{\omega_0^n} \right) -\frac{n+1}{2}(\hu+ u(t_0,\cdot)), \quad \hu(0,\cdot)=0.
 \label{ricci.9}\end{equation}
On suppose alors que $u(t_0,\cdot)\in \cA^{E_\infty}_{\phg}(X)$ avec $u_{\pa}(t_0,\cdot)= \left. u(t_0,\cdot)\right|_{\pa X}=0$.  Comme précédemment, même si on ne sait pas a priori si la restriction de $\hu$ au bord a un sens, on sait que l'équation \eqref{ricci.9} a quant à elle une restriction bien définie sur $\pa X$,
$$
       \frac{\pa \hu_{\pa}}{\pa t}= -\frac{n+1}{2} \hu_{\pa}, \quad \left. \hu\right|_{t=0}=0.
$$
La solution de cette équation est donnée par $\hu_{\pa}\equiv 0$.  On s'attend donc à ce que la restriction de $\hu$ au bord soit nulle.  Pour voir que c'est le cas pour $t>0$ assez petit, remarquons dans un premier temps que $\homega_t$ n'est pas nécessairement une $2$-forme non-dégénérée, mais qu'elle le sera pour $t>0$ assez petit du fait que $\homega_0=\widetilde{\omega}_{t_0}$ est non-dégénérée par hypothèse.  Pour avoir une estimation uniforme et indépendante de notre choix de $t_0$, notons qu'ils existe une constante $C>0$ telle que 
\begin{equation}
  \frac{\omega_0}{C} \le \widetilde{\omega}_t \le C\omega_0, \quad  \forall t\in [0,T].
\label{ricci.10}\end{equation} 
En prenant la constant $C>0$ plus grande si nécessaire, on peut aussi supposer que 
\begin{equation}
       \left\| \frac{\pa}{\pa t} \pa \db u\right\|_{\cC^0_{\Theta}(X\setminus\pa X; \Lambda^2(T^*(X\setminus\pa X)))} < C, \quad \forall \; t\in [0,T].
\label{ricci.11}\end{equation}
Comme d'autre part 
$$
        \frac{\pa \homega_t}{\pa t}= -\frac{n+1}{2} e^{-\frac{n+1}{2}(t+t_0)}\left(\frac{2}{n+1}\Ric(\omega_0)+\omega_0\right),
$$
on voit donc qu'on peut choisir $\tau>0$ indépendamment de notre choix de $t_0$ de sorte que 
\begin{equation}
   \homega_t> \frac{\omega_0}{2C}, \quad \forall\; t\in [0,\tau_{t_0}], \quad \mbox{où} \; \tau_{t_0}:= \min\{\tau, T-t_0\}.
\label{ricci.12}\end{equation}
La $2$-forme $\homega_t$ est donc kählérienne pour $t\in[0,\tau_{t_0}]$, ce qui nous permet de réécrire le terme logarithmique de \eqref{ricci.9} comme suit,
\begin{equation}
\begin{aligned}
\log\left(  \frac{(\homega_t+i\pa \db u)^n}{\omega_0^n}  \right)&= \log\left(  \frac{(\homega_t+i\pa \db u)^n}{\homega_t^n}  \right)+ \log\left(\frac{\homega_t^n}{\omega_0^n}\right) \\
                    &= \log\left( 1+ \hDelta_t\hu +F(\pa\db\hu,\homega_t) \right) + \log\left(\frac{\homega_t^n}{\omega_0^n}\right),
\end{aligned}
\label{ricci.13}\end{equation}
où $\hDelta_t$ est le $\db$-laplacien de $\homega_t$ et la fonction $F$ est un polynôme de degré $n$ dans la première variable sans terme constant ou linéaire.  On a donc que
$$
\log\left( 1+ \hDelta_t\hu +F(\pa\db\hu,\homega_t) \right)= \hDelta_t \hu + H(\pa\db \hu, \homega_t)\pa\db\hu,
$$ 
où $H$ est une application à valeur dans $\CI_{\Theta}(X\setminus \pa X;\Lambda^2(T(X\setminus\pa X))$ s'annulant au moins linéairement dans la première variable lorsque celle-ci est zéro.  L'application $H$ est indépendante de notre choix de $t_0$.  Par \eqref{ricci.11} et \eqref{ricci.12}, cela signifie que qu'en prenant $\tau>0$ plus petit si nécessaire, mais toujours indépendant du choix de $t_0\in[0,T]$, on peut supposer que la famille lisse d'opérateurs $t\to L_t\in \Diff^2_{\Theta}(X\setminus \pa X)$ donnée par
$$
              L_t f= \hDelta_t f + H(\pa\db\hu, \homega_t)\pa\db f 
$$ 
est telle que $\pa_t- L_t$ est uniformément $\Theta$-parabolique pour $t\in[0,\tau_{t_0}]$, où $\tau_{t_0}= \min\{\tau, T-t_0\}$.  En termes de cette famille d'opérateurs, on peut réécrire \eqref{ricci.9} de la manière suivante,
\begin{equation}
\frac{\pa \hu}{\pa t}= \left(L_t \hu-\frac{n+1}{2}\hu\right) + \left(\log\left(\frac{\homega_t^n}{\omega_0^n}\right)-\frac{n+1}{2}u(t_0,\cdot)  \right), \quad \hu(0)=0, \quad t\in[0,\tau_{t_0}].
\label{ricci.14}\end{equation} 
Comme par hypothèse on a que 
$$
    f:=\log\left(\frac{\homega_t^n}{\omega_0^n}\right)-\frac{n+1}{2}u(t_0,\cdot) \in \cA^{E_{\infty}}_{\phg}([0,\tau_{t_0}]\times X), \quad \left. f\right|_{\pa X}=0,
$$
on peut appliquer le Corollaire~\ref{par.3} à l'équation \eqref{ricci.14} pour obtenir que
\begin{equation}
                      \hu\in \rho^{\delta}\CI_{\Theta}(X\setminus \pa X) \quad \forall \; 0<\delta<p,
\label{ricci.14a}\end{equation}
où $p>0$ est le plus petit réel positif tel que $(p,0)\in E_{\infty}$.  En procédant par récurrence, on peut plus généralement obtenir le résultat suivant.
\begin{proposition}
Soit $t_0\in [0,T)$.  Si $u(t_0,0)\in \cA^{E_\infty}_{\phg}(X)$ avec $\left. u(t_0,0)\right|_{\pa X}=0$, alors la solution $\hu$ de \eqref{ricci.9} est telle que
$$
                     \hu\in\cA^{E_{\infty}}_{\phg}([0,\tau_{t_0}]\times X), \quad \left.\hu\right|_{\pa X}=0.
$$
\label{ricci.15}\end{proposition}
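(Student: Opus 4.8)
The plan is to follow step by step the induction carried out in the proof of Theorem~\ref{ahc.7}, now for the scalar equation \eqref{ricci.14} in place of the tensorial Ricci--DeTurck equation. The full linear operator governing \eqref{ricci.14} is $L_{t}-\frac{n+1}{2}$, which is still uniformly $\Theta$-parabolic since the zeroth-order term does not affect the principal symbol, and the base case is already \eqref{ricci.14a}, giving $\hu\in\rho^{\delta}\CI_{\Theta}(X\setminus\pa X)$ for every $\delta<p=z_{1}$. In particular $\hu|_{\pa X}=0$, which disposes of the boundary-vanishing claim. I reuse the same strictly increasing sequence $\{z_{j}\}_{j\in\bbN_{0}}$ of exponents occurring in $E_{\infty}$, with $z_{0}=0$ and the inequality \eqref{ahc.13}, and seek for each $j$ an element $\hu_{j}\in\cA^{E_{\infty}}_{\phg}([0,\tau_{t_0}]\times X)$ with $\hu_{j}(0,\cdot)=0$ and $\hu-\hu_{j}\in\rho^{\nu}\CI_{\Theta}$ for all $\nu<z_{j+1}$; taking $\hu_{0}=0$ starts the induction.

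For the inductive step I set $v=\hu-\hu_{j-1}$, so that $\pa_{t}v=(L_{t}-\frac{n+1}{2})v+f^{j}$ with $f^{j}=f+(L_{t}-\frac{n+1}{2})\hu_{j-1}-\pa_{t}\hu_{j-1}$, where $f\in\cA^{E_{\infty}}_{\phg}$ with $f|_{\pa X}=0$ is as in \eqref{ricci.14}. The single delicate point, exactly as in Theorem~\ref{ahc.7}, is that $L_{t}$ depends on $\hu$ through the factor $H(\pa\db\hu,\homega_{t})$ and is therefore not a priori polyhomogeneous. I resolve this precisely as in \eqref{ahc.17}: since $\hu$ agrees with the polyhomogeneous $\hu_{j-1}$ modulo $\rho^{\nu}\CI_{\Theta}$ for $\nu<z_{j}$, and since $H$ vanishes at least linearly in its first argument, one gets $L_{t}\in\Diff^{2}_{\Theta,E_{\infty}}+\rho^{\nu}\Diff^{2}_{\Theta}$ for all $\nu<z_{j}$. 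As $\hu_{j-1}\in\rho^{\nu}\CI_{\Theta}$ for every $\nu<z_{1}$ and $z_{1}\ge z_{j+1}-z_{j}$ by \eqref{ahc.13}, the non-polyhomogeneous part of $(L_{t}-\frac{n+1}{2})\hu_{j-1}$ lies in $\rho^{\nu}\CI_{\Theta}$ for $\nu<z_{j+1}$; combined with $f\in\cA^{E_{\infty}}_{\phg}$ this gives $f^{j}\in\cA^{E_{\infty}}_{\phg}+\rho^{\nu}\CI_{\Theta}$ for all $\nu<z_{j+1}$.

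With $f^{j}$ controlled, the remainder is routine and identical to Theorem~\ref{ahc.7}. Letting $k$ be the largest integer with $(z_{j},k)\in E_{\infty}$, I restrict \eqref{ricci.14} to the coefficient of $\rho^{z_{j}}(\log\rho)^{k}$ on $\pa X$; this yields a genuinely linear first-order ODE in $t$ for the candidate coefficient $v_{\pa}$, linear because the quasilinear term $H(\pa\db\hu,\homega_{t})\pa\db$ has boundary-vanishing coefficients (as $\pa\db\hu|_{\pa X}=0$) and hence does not enter the indicial operator, so its coefficient comes only from $\hDelta_{t}$ and is independent of $v_{\pa}$. Solving it with $v_{\pa}(0)=0$, subtracting $w_{\pa}=\rho^{z_{j}}(\log\rho)^{k}\Xi(v_{\pa})$, and applying Corollaire~\ref{par.3} lowers the power of $\log\rho$ by one; iterating $k+1$ times produces $b_{0},\dots,b_{k}$ with $\hu_{j}=\hu_{j-1}+\sum_{\ell=0}^{k}\Xi(b_{\ell})\rho^{z_{j}}(\log\rho)^{\ell}$ satisfying the required estimate at level $j$.

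I expect the only real obstacle to be the bookkeeping of the quasilinear term, namely checking that $H(\pa\db\hu,\homega_{t})$ contributes a $\rho^{\nu}\Diff^{2}_{\Theta}$ error of exactly the order needed for \eqref{ahc.13} to absorb it; everything else is a transcription of the Ricci--DeTurck argument. A small preliminary verification, already recorded in the discussion preceding \eqref{ricci.14}, is that $\homega_{t}$ and hence $f=\log(\homega_{t}^{n}/\omega_{0}^{n})-\frac{n+1}{2}u(t_{0},\cdot)$ are polyhomogeneous with index set $E_{\infty}$ and vanish on $\pa X$; this rests on the polyhomogeneity of $g_{0}$, the hypothesis $u(t_{0},\cdot)\in\cA^{E_{\infty}}_{\phg}(X)$ with $u(t_{0},\cdot)|_{\pa X}=0$, and Lemme~\ref{ahc.3} for the Ricci contribution.
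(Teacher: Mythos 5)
Your proposal is correct and takes exactly the same route as the paper: the paper's own proof of Proposition~\ref{ricci.15} consists of a single line saying to repeat the induction of the proof of Théorème~\ref{ahc.7} with the evolution equation \eqref{ricci.14} in place of \eqref{ahc.11}, which is precisely what you have carried out, including the key point of handling the non-polyhomogeneous quasilinear term $H(\pa\db\hu,\homega_t)$ via the analogue of \eqref{ahc.17} and the absorption inequality \eqref{ahc.13}. Your write-up merely makes explicit the details the paper leaves implicit (base case \eqref{ricci.14a}, boundary ODE, iterated reduction of the $\log\rho$ power via Corollaire~\ref{par.3}).
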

\begin{proof}
Il suffit de procéder comme dans la preuve du Théorème~\ref{ahc.7} en considérant l'équation d'évolution \eqref{ricci.14} au lieu de \eqref{ahc.11}.
\end{proof}

La Proposition~\ref{ricci.15} nous permet alors de facilement compléter la démonstration du Théorème~\ref{ricci.5}.

\begin{proof}[Démonstration du Théorème~\ref{ricci.5}]

En appliquant la Proposition~\ref{ricci.15} pour $t_0=0$, on sait que le résultat du théorème sera valide sur l'intervalle $[0, \tau_{t_0}]$, où $\tau_{t_0}= \min\{\tau, T-t_0 \}$.  Si $\tau\ge T$, la démonstration est terminée.  Autrement, on peut appliquer à nouveau la Proposition~\ref{ricci.15} avec $t_0=\tau, \ldots k\tau$, où $k$ le plus grand entier tel que $k\tau < T$, pour couvrir tout l'intervalle $[0,T]$.

\end{proof}

\bibliography{Ricci_Theta}

\def\cprime{$'$}
\providecommand{\bysame}{\leavevmode\hbox to3em{\hrulefill}\thinspace}
\providecommand{\MR}{\relax\ifhmode\unskip\space\fi MR }
\providecommand{\MRhref}[2]{%
  \href{http://www.ams.org/mathscinet-getitem?mr=#1}{#2}
}
\providecommand{\href}[2]{#2}
\begin{thebibliography}{10}

\bibitem{AAR2013}
P.~Albin, C.L. Aldana, and F.~Rochon, \emph{Ricci flow and the determinant of
  the {L}aplacian on non-compact surfaces}, Comm. Partial Differential
  Equations \textbf{38} (2013), 711--749.

\bibitem{Bahuaud2011}
E.~Bahuaud, \emph{Ricci flow of conformally compact metrics}, Ann. Inst. H.
  Poincar\'e Anal. Non Lin\'eaire \textbf{28} (2011), no.~6, 813--835.

\bibitem{Bamler2010a}
Richard~H. Bamler, \emph{Stability of hyperbolic manifolds with cusps under
  ricci flow}, preprint, arXiv:1004.2058.

\bibitem{Bamler2010b}
\bysame, \emph{Stability of symmetric spaces of noncompact type under ricci
  flow}, preprint, arXiv:1011.4267.

\bibitem{Biquard2000}
Olivier Biquard, \emph{M\'etriques d'{E}instein asymptotiquement
  sym\'etriques}, Ast\'erisque (2000), no.~265, vi+109. \MR{1760319
  (2001k:53079)}

\bibitem{Chau2004}
A.~Chau, \emph{Convergence of the {K}\"ahler-{R}icci flow on noncompact
  {K}\"ahler manifolds}, J. Differential Geom. \textbf{66} (2004), 211--232.

\bibitem{Chen-Zhu2006}
Bing-Long Chen and Xi-Ping Zhu, \emph{Uniqueness of the {R}icci flow on
  complete noncompact manifolds}, J. Differential Geom. \textbf{74} (2006),
  no.~1, 119--154. \MR{2260930 (2007i:53071)}

\bibitem{Cheng-Yau}
S.-Y. Cheng and S.T. Yau, \emph{On the existence of a complete {K}\"ahler
  metric on non-compact complex manifolds and the regularity of {F}efferman's
  equation}, Comm. Pure Appl. Math. \textbf{33} (1980), 507--544.

\bibitem{Chow-Knopf}
Bennett Chow and Dan Knopf, \emph{The {R}icci flow: an introduction},
  Mathematical Surveys and Monographs, vol. 110, American Mathematical Society,
  Providence, RI, 2004. \MR{2061425 (2005e:53101)}

\bibitem{EMM}
G.A.~Mendoza C.L.~Epstein and R.B. Melrose, \emph{Resolvent of the {L}aplacian
  on strictly pseudoconvex domains}, Acta Math. \textbf{167} (1991), 1--106.

\bibitem{Deturk}
Dennis~M. DeTurck, \emph{Deforming metrics in the direction of their {R}icci
  tensors}, J. Differential Geom. \textbf{18} (1983), no.~1, 157--162.
  \MR{697987 (85j:53050)}

\bibitem{Fefferman76}
C.~Fefferman, \emph{Monge-{A}mp\`ere equations, the {B}ergman kernel, and the
  geometry of pseudoconvex domains}, Ann. of Math. \textbf{103} (1976),
  395--416.

\bibitem{Giesen-Topping2011}
Gregor Giesen and Peter~M. Topping, \emph{Existence of {R}icci flows of
  incomplete surfaces}, Comm. Partial Differential Equations \textbf{36}
  (2011), no.~10, 1860--1880. \MR{2832165 (2012i:53063)}

\bibitem{IMS2010}
James Isenberg, Rafe Mazzeo, and Natasa Sesum, \emph{Ricci flow on
  asymptotically conical surfaces with nontrivial topology}, J. Reine Angew.
  Math. \textbf{676} (2013), 227--248. \MR{3028760}

\bibitem{Ji-Mazzeo-Sesum2009}
Lizhen Ji, Rafe Mazzeo, and Natasa Sesum, \emph{Ricci flow on surfaces with
  cusps}, Math. Ann. \textbf{345} (2009), no.~4, 819--834. \MR{2545867
  (2011c:53161)}

\bibitem{LSU1968}
O.~A. Lady{\v{z}}enskaja, V.~A. Solonnikov, and N.~N. Ural{\cprime}ceva,
  \emph{Linear and quasilinear equations of parabolic type}, Translated from
  the Russian by S. Smith. Translations of Mathematical Monographs, Vol. 23,
  American Mathematical Society, Providence, R.I., 1968. \MR{0241822 (39
  \#3159b)}

\bibitem{Lee-Melrose}
John Lee and Richard Melrose, \emph{Boundary behavior of the complex
  {M}onge-{A}mp\`ere equation}, Acta Math. \textbf{148} (1982), 159--192.

\bibitem{Lott-Zhang2011}
John Lott and Zhou Zhang, \emph{Ricci flow on quasiprojective manifolds}, Duke
  Math. J. \textbf{156} (2011), no.~1, 87--123.

\bibitem{MazzeoEdge}
R.~Mazzeo, \emph{Elliptic theory of differential edge operators. {I}}, Comm.
  Partial Differential Equations \textbf{16} (1991), no.~10, 1615--1664.

\bibitem{Mazzeo-MelrosePhi}
R.~Mazzeo and R.~B. Melrose, \emph{Pseudodifferential operators on manifolds
  with fibred boundaries}, Asian J. Math. \textbf{2} (1999), no.~4, 833--866.

\bibitem{Mazzeo-Melrose}
R.~Mazzeo and R.B. Melrose, \emph{Meromorphic extension of the resolvent on
  complete spaces with asymptotically constant negative curvature}, J. Funct.
  Anal. \textbf{75} (1987), no.~2, 260--310.

\bibitem{MelroseAPS}
R.B. Melrose, \emph{The {A}tiyah-{P}atodi-{S}inger index theorem}, A. K.
  Peters, Wellesley, Massachusetts, 1993.

\bibitem{MelroseGST}
\bysame, \emph{Geometric scattering theory}, Cambridge University Press,
  Cambridge, 1995.

\bibitem{Mok-Yau}
Ngaiming Mok and Shing-Tung Yau, \emph{Completeness of the
  {K}\"ahler-{E}instein metric on bounded domains and the characterization of
  domains of holomorphy by curvature conditions}, The mathematical heritage of
  {H}enri {P}oincar\'e, {P}art 1 ({B}loomington, {I}nd., 1980), Proc. Sympos.
  Pure Math., vol.~39, Amer. Math. Soc., Providence, RI, 1983, pp.~41--59.
  \MR{720056 (85j:53068)}

\bibitem{QSW2013}
Jie Qing, Yuguang Shi, and Jie Wu, \emph{Normalized {R}icci flows and
  conformally compact {E}instein metrics}, Calc. Var. Partial Differential
  Equations \textbf{46} (2013), no.~1-2, 183--211. \MR{3016507}

\bibitem{Rochon-Zhang2012}
Fr{\'e}d{\'e}ric Rochon and Zhou Zhang, \emph{Asymptotics of complete
  {K}\"ahler metrics of finite volume on quasiprojective manifolds}, Adv. Math.
  \textbf{231} (2012), no.~5, 2892--2952. \MR{2970469}

\bibitem{SSS2008}
Oliver~C. Schn{\"u}rer, Felix Schulze, and Miles Simon, \emph{Stability of
  {E}uclidean space under {R}icci flow}, Comm. Anal. Geom. \textbf{16} (2008),
  no.~1, 127--158. \MR{2411470 (2010a:53142)}

\bibitem{SSS2011}
\bysame, \emph{Stability of hyperbolic space under {R}icci flow}, Comm. Anal.
  Geom. \textbf{19} (2011), no.~5, 1023--1047. \MR{2886716 (2012m:53145)}

\bibitem{Shi1989}
Wan-Xiong Shi, \emph{Deforming the metric on complete {R}iemannian manifolds},
  J. Differential Geom. \textbf{30} (1989), no.~1, 223--301.

\bibitem{Vaillant}
B.~Vaillant, \emph{Index and spectral theory for manifolds with generalized
  fibred cusp}, Ph.D. dissertation, Bonner Math. Schriften 344, Univ. Bonn.,
  Mathematisches Institut, Bonn, math.DG/0102072 (2001).

\bibitem{Coevering2012}
Craig van Coevering, \emph{K\"ahler-{E}instein metrics on strictly pseudoconvex
  domains}, Ann. Global Anal. Geom. \textbf{42} (2012), no.~3, 287--315.
  \MR{2972615}

\end{thebibliography}
\bibliographystyle{amsplain}
\hspace{0.5cm}

\small{Département de Mathématiques, Université du Québec à Montréal} 

 \small{\textit{Courriel}\! \!\!\!: rochon.frederic@uqam.ca}

\end{document}